\journal{}
\newcommand{\eps}{\varepsilon}
\newcommand{\set}[1]{\left\{#1\right\}}
\newcommand{\p}{\partial}
\newcommand{\mH}{\mathbf{H}}
\newcommand{\me}{\mathbf{e}}
\newcommand{\mn}{\mathbf{n}}
\newcommand{\mt}{\mathbf{t}}
\newcommand{\mx}{\mathbf{x}}
\newcommand{\mz}{\mathbf{z}}
\newcommand{\ve}{\boldsymbol{\eta}}
\newcommand{\vn}{\boldsymbol{\nu}}
\newcommand{\vt}{\boldsymbol{\theta}}
\newtheorem{thm}{Theorem}[section]
\newtheorem{cor}[thm]{Corollary}
\newtheorem{lem}[thm]{Lemma}
\newcommand*{\rom}[1]{\expandafter\@slowromancap\romannumeral #1@}
\begin{document}

\begin{frontmatter}



\title{Topological derivative-based technique for imaging thin inhomogeneities with few incident directions}

\author{Won-Kwang Park}
\ead{parkwk@kookmin.ac.kr}
\address{Department of Mathematics, Kookmin University, Seoul, 136-702, Korea.}

\begin{abstract}
Many non-iterative imaging algorithms require a large number of incident directions. Topological derivative-based imaging techniques can alleviate this problem, but lacks a theoretical background and a definite means of selecting the optimal incident directions. In this paper, we rigorously analyze the mathematical structure of a topological derivative imaging function, confirm why a small number of incident directions is sufficient, and explore the optimal configuration of these directions. To this end, we represent the topological derivative based imaging function as an infinite series of Bessel functions of integer order of the first kind. Our analysis is supported by the results of numerical simulations.\end{abstract}

\begin{keyword}
Topological derivative \sep thin penetrable inhomogeneities \sep Bessel function \sep numerical experiments



\end{keyword}

\end{frontmatter}





\section{Introduction}
Inverse scattering problems identify certain characteristics of unknown targets embedded in a medium from the measured boundary data. For this purpose, researchers have developed various identification algorithms, most of which are based on Newton-type iteration schemes. Related works can be found in \cite{ADIM,B3,DL,LLS,PL4,S1,SZ,VXB,Z} and references therein. Although these schemes are regarded as promising techniques, they are not extendible to multiple-target identification. Furthermore, a good result requires additional regularization terms that largely depend on the specific problems, computation of the Fr{\'e}chet derivative, and \textit{a priori} information of the unknown targets. Moreover, if the initial guess is poorly chosen, the iteration procedure leads to more severe problems such as non-convergence, local (rather than global) minimization, and slow convergence to a solution (which incurs high computational cost). Generally, the success of iteration-based schemes highly depends on a good initial guess.

As an alternative, various non-iterative detection techniques have been investigated. Examples are multiple signal classification (MUSIC), the linear sampling method, Kirchhoff and subspace migrations, and inverse Fourier transform-based algorithms. Unfortunately, these methods yield inaccurate results, and require a large number of directions of the incident and scattered fields \cite{AK2}. The topological derivative strategy is a non-iterative imaging technique that has been successfully applied to various inverse scattering problems. The topological derivative can accurately replicate the true shape of the unknown target, even with few directions of the incident field data \cite{AGJK,P-TD1,P-TD3}. However, this advantage has mostly been confirmed through numerical simulations. The development of an appropriate mathematical theory remains an interesting and worthwhile task.

The present paper analyzes the mathematical structure of a topological derivative-based imaging function with a small number of directions of the incident fields. The function is applied to an arbitrarily shaped, thin penetrable inhomogeneity. Under thin-inhomogeneity conditions, uniform convergence of the Jacobi-Anger expansion formula and the asymptotic properties of Bessel functions, the far-field pattern can be represented by an asymptotic expansion formula. Therefore, we show that the imaging function can be represented as an infinite series of Bessel functions of integer order of the first kind. From the derived structure of the imaging function, we confirm that to ensure a good result, we require an even number of directions (at least $4$); moreover, these directions must be symmetric.

The remainder of this paper is organized as follows. Section \ref{sec:2} briefly introduces the two-dimensional direct scattering problem and the topological derivative. Section \ref{sec:3} explores the structures of the single- and multi-frequency topological derivative imaging functions with a small number of incident directions. In Section \ref{sec:4}, our investigations are supported by the results of numerical simulations with noisy data. Section \ref{sec:5} presents a short conclusion outlining the current work and suggesting ideas for future work.

\section{Direct scattering problem and topological derivative}\label{sec:2}
This section briefly introduces the two-dimensional direct scattering problem for a thin penetrable inhomogeneity, and presents the normalized topological derivative imaging function. A more detailed description is given in \cite{P-TD1,P-TD3}.
\subsection{Two-dimensional direct scattering problem}
Let $\Omega\subset\mathbb{R}^2$ be a homogeneous domain with a smooth boundary $\p\Omega$, which is a $\mathcal{C}^3$ curve. This domain contains a thin, curve-like homogeneous electromagnetic inhomogeneity. Let us assume that this thin inhomogeneity (denoted as $\Gamma$) resides in the neighborhood of a simple smooth curve $\sigma:=\sigma(\mx)$ as
\[\Gamma=\set{\mx+\gamma\mn(\mx):\mx\in\sigma,~\gamma\in(-h,h)},\]
where $\mn(\mx)$ is the unit normal to $\sigma$ at $\mx$ and $h$ is a positive constant denoting the thickness of $\Gamma$ (see Figure \ref{FigureGamma}). Throughout this paper, we assume that the applied frequency of a given wavelength $\lambda$ is $\omega=\frac{2\pi}{\lambda}$, and that the thickness $h$ of $\Gamma$ is sufficiently smaller than $\lambda$ ($h\ll\lambda$). We also assume that the inhomogeneity is located at some distance from the boundary $\p\Omega$, and never touches that boundary. In other words, there is a nonzero positive constant $s$ such that
\[\mbox{dist}(\sigma,\p\Omega)=s\gg h.\]

\begin{figure}
\begin{center}
\includegraphics[width=0.4\textwidth,keepaspectratio=true,angle=0]{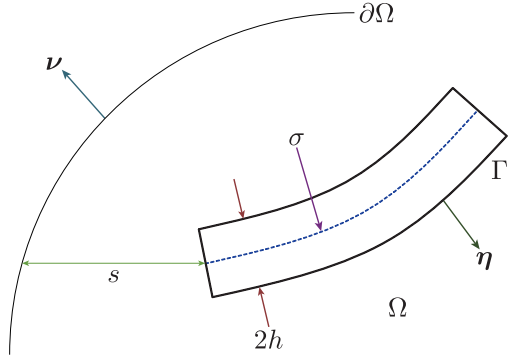}
\caption{\label{FigureGamma}Thin penetrable inhomogeneity $\Gamma$ of thickness $2h$ embedded in $\Omega\subset\mathbb{R}^2$.}
\end{center}
\end{figure}

Let every material be classified by its dielectric permittivity and magnetic permeability at a given frequency $\omega$. Let the permittivity and permeability be $0<\eps_0<+\infty$ and $0<\mu_0<+\infty$ respectively in the domain $\Omega$, and $0<\eps<+\infty$ and $0<\mu<+\infty$ respectively in the inhomogeneity $\Gamma$. We then define the piecewise constant dielectric permittivity $\eps(\mx)$ and magnetic permeability $\mu(\mx)$ as
\begin{equation}\label{EPSMU}
\eps(\mx)=\left\{\begin{array}{ccl}
\eps_0&\mbox{for}&\mx\in\Omega\backslash\overline{\Gamma}\\
\eps&\mbox{for}&\mx\in\Gamma
\end{array}\right.
\quad\mbox{and}\quad
\mu(\mx)=\left\{\begin{array}{ccl}
\mu_0&\mbox{for}&\mx\in\Omega\backslash\overline{\Gamma}\\
\mu&\mbox{for}&\mx\in\Gamma,
\end{array}\right.
\end{equation}
respectively. For simplicity, we set $\eps_0=\mu_0=1$, $\eps>\eps_0$, and $\mu>\mu_0$.

When $\Gamma$ exists, let $u^{(n)}(\mx;\omega)$ be the time-harmonic total field satisfying the Helmholtz equation at frequency $\omega$. Then we have
\begin{equation}\label{ForwardProblem}
\left\{\begin{array}{rcl}
\displaystyle\nabla\cdot\left(\frac{1}{\mu(\mx)}\nabla u^{(n)}(\mx;\omega)\right)+\omega^2\eps(\mx)u^{(n)}(\mx;\omega)=0&\mbox{in}&\Omega\\
\noalign{\medskip}\displaystyle\frac{1}{\mu_0}\frac{\p u^{(n)}(\mx;\omega)}{\p\vn(\mx)}=\frac{\p e^{i\omega\vt_n\cdot\mx}}{\p\vn(\mx)}=g^{(n)}(\mx;\omega)\in L^2(\partial\Omega)&\mbox{on}&\p\Omega,\\
\end{array}\right.
\end{equation}
with transmission conditions
\[u^{(n)}(\mx;\omega)|_+=u^{(n)}(\mx;\omega)|_-\quad\mbox{and}\quad
\frac{1}{\mu_0}\frac{\p u^{(n)}(\mx;\omega)}{\p \ve(\mx)}\bigg|_+=\frac{1}{\mu}\frac{\p u^{(n)}(\mx;\omega)}{\p \ve(\mx)}\bigg|_-\quad\mbox{on}\quad\p\Gamma.\]
Here, $\vn(\mx)$ and $\ve(\mx)$ represent the unit outward normal to $\mx\in\p\Omega$ and $\mx\in\p\Gamma$, respectively. The subscript $\pm$ denotes the limiting values as
\begin{align*}
  u^{(n)}(\mx;\omega)|_\pm&=\lim_{t\to0+}u^{(n)}(\mx\pm t\ve(\mx);\omega)\\
  \frac{\p u^{(n)}(\mx;\omega)}{\p \ve(\mx)}\bigg|_\pm&=\lim_{t\to0+}\nabla u^{(n)}(\mx\pm t\ve(\mx);\omega)\cdot\ve(\mx)
\end{align*}
for $\mx\in\p\Gamma$, and $\vt_n$ denotes a two-dimensional vector on the unit circle $\mathbb{S}^1$ for $n=1,2,\cdots,N$.

Similarly, in the absence of $\Gamma$, let $u_{\mathrm{B}}^{(n)}(\mx;\omega)=e^{i\omega\vt_n\cdot\mx}$ denote a field satisfying (\ref{ForwardProblem}). This is the background solution. Throughout this paper, we assume that $\omega^2$ is not an eigenvalue of (\ref{ForwardProblem}).

\subsection{Review of normalized topological derivative based imaging function}
In this section, we introduce the basic concept of the topological derivative operated at a fixed single frequency. For detailed discussions, the reader is referred to \cite{AGJK,AKLP,B1,CR,EKS,JSV,P-TD1,P-TD3,SZ}. Let $u^{(n)}(\mx;\omega)$ and $u_{\mathrm{B}}^{(n)}(\mx;\omega)$ be the total and background solutions of (\ref{ForwardProblem}), respectively. To find the shape of $\Gamma$, we consider the following energy function, which depends on the solution $u^{(n)}(\mx;\omega)$:
\begin{equation}\label{Energy}
  \mathbb{E}(\Omega;\omega):=\frac12\sum_{n=1}^{N}\|u^{(n)}(\mx;\omega)-u_{\mathrm{B}}^{(n)}(\mx;\omega)\|_{L^2(\partial\Omega)}^2=\frac12\sum_{n=1}^{N}\int_{\p\Omega}|u^{(n)}(\mx;\omega)-u_{\mathrm{B}}^{(n)}(\mx;\omega)|^2dS(\mx).
\end{equation}

Assume that an electromagnetic inhomogeneity $\Sigma$ of small diameter $r$ is created at a certain position $\mz\in\Omega\backslash\partial\Omega$. Let $\Omega|\Sigma$ denote the domain of this position. As the inhomogeneity changes the topology of the entire domain, we can consider the corresponding topological derivative $d_T\mathbb{E}(\mz)$ on $\mathbb{E}(\Omega)$ with respect to point $\mz$ as
\begin{equation}\label{TopDerivative}
  d_T\mathbb{E}_{\mathrm{MF}}(\mz;\omega)=\lim_{r\to0+}\frac{\mathbb{E}(\Omega|\Sigma;\omega) -\mathbb{E}(\Omega;\omega)}{\varphi(r;\omega)},
\end{equation}
where $\varphi(r;\omega)\longrightarrow0$ as $r\longrightarrow0+$. From (\ref{TopDerivative}), we obtain the asymptotic expansion:
\begin{equation}\label{AsymptoticExpansionTopologicalDerivative}
  \mathbb{E}(\Omega|\Sigma;\omega)=\mathbb{E}(\Omega;\omega)+\varphi(r;\omega)d_T\mathbb{E}_{\mathrm{MF}}(\mz;\omega)+o(\varphi(r;\omega)).
\end{equation}

The following normalized topological derivative imaging function $\mathbb{E}_{\mathrm{SF}}(\mz;\omega)$ was introduced in \cite{P-TD3}:
\begin{equation}\label{NormalizedTopologicalDerivative}
  \mathbb{E}_{\mathrm{SF}}(\mz;\omega)=\frac{1}{2}\bigg(\frac{d_T\mathbb{E}_\eps(\mz;\omega)}{\max[d_T\mathbb{E}_\eps(\mz;\omega)]}+\frac{d_T\mathbb{E}_\mu(\mz;\omega)}{\max[d_T\mathbb{E}_\mu(\mz;\omega)]}\bigg).
\end{equation}
In the cases of purely dielectric permittivity contrast ($\eps\ne\eps_0$ and $\mu=\mu_0$) and magnetic permeability contrast ($\eps=\eps_0$ and $\mu\ne\mu_0$), the $d_T\mathbb{E}_\eps(\mz;\omega)$ and $d_T\mathbb{E}_\mu(\mz;\omega)$ satisfying (\ref{AsymptoticExpansionTopologicalDerivative}) are respectively given by (see \cite{P-TD3})
  \begin{align}
      d_T\mathbb{E}_\eps(\mz;\omega)&=\mathrm{Re}\sum_{n=1}^{N}\bigg(u_{\mathrm{A}}^{(n)}(\mz;\omega)\overline{u_{\mathrm{B}}^{(n)}(\mz;\omega)}\bigg),\label{TopologicalDerivative1}\\
      d_T\mathbb{E}_\mu(\mz;\omega)&=-\mathrm{Re}\sum_{n=1}^{N}\bigg(\nabla u_{\mathrm{A}}^{(n)}(\mz;\omega)\cdot\overline{\nabla u_{\mathrm{B}}^{(n)}(\mz;\omega)}\bigg),\label{TopologicalDerivative2}
  \end{align}
where $u_{\mathrm{A}}^{(n)}(\mx;\omega)$ satisfies the adjoint problem
\begin{equation}\label{Adjoint1}
  \left\{\begin{array}{rcl}
    \displaystyle\Delta u_{\mathrm{A}}^{(n)}(\mx;\omega)+\omega^2u_{\mathrm{A}}^{(n)}(\mx;\omega)=0&\mbox{in}&\Omega\\
    \noalign{\medskip}\displaystyle\frac{1}{\mu_0}\frac{\p u_{\mathrm{A}}^{(n)}(\mx;\omega)}{\p\boldsymbol{\nu}(\mx)}=u^{(n)}(\mx;\omega)-u_{\mathrm{B}}^{(n)}(\mx;\omega)&\mbox{on}&\p\Omega.
  \end{array}\right.
\end{equation}

We now analyze the properties of (\ref{MultiFrequencyTopologicalDerivative}). The following result from \cite{P-TD3} plays an important role in our analysis.

\begin{lem}\label{lemma}
  Let $A\sim B$ imply the existence of some constant $C$ such that $A=BC$, and let $\mathrm{Re}(f)$ denote the real part of $f$. Then, (\ref{TopologicalDerivative1}) and (\ref{TopologicalDerivative2}) satisfy
  \begin{align*}
    d_T\mathbb{E}_\eps(\mz;\omega)&\sim\mathrm{Re}\sum_{n=1}^{N}\int_\sigma(\eps-\eps_0) e^{i\omega\vt_n\cdot(\mx-\mz)}d\sigma(\mx)\\
    d_T\mathbb{E}_\mu(\mz;\omega)&\sim\mathrm{Re}\sum_{n=1}^{N}\int_\sigma \bigg[2\bigg(\frac{1}{\mu}-\frac{1}{\mu_0}\bigg)\vt_n\cdot\mt(\mx)
    +2\bigg(\frac{1}{\mu_0}-\frac{\mu}{\mu_0^2}\bigg)\vt_n\cdot\mn(\mx)\bigg] e^{i\omega\vt_n\cdot(\mx-\mz)}d\sigma(\mx),
  \end{align*}
  where $\mt(\mx)$ and $\mn(\mx)$ are unit vectors that are respectively tangent and normal to the supporting curve $\sigma$ at $\mx$.
\end{lem}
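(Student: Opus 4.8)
The plan is to trace $d_T\mathbb{E}_\eps$ and $d_T\mathbb{E}_\mu$ back through the adjoint problem (\ref{Adjoint1}) and the known thin-inhomogeneity asymptotics of the scattered field, retaining only the factors that genuinely depend on $\mz$, $\mx$, and $\vt_n$ and absorbing every remaining multiplicative constant (including the overall power of $h$) into the relation $\sim$. First I would apply the Helmholtz representation formula to (\ref{Adjoint1}): since $u_{\mathrm{A}}^{(n)}(\cdot;\omega)$ solves $\Delta u_{\mathrm{A}}^{(n)}+\omega^2u_{\mathrm{A}}^{(n)}=0$ in $\Omega$ with Neumann trace $u^{(n)}-u_{\mathrm{B}}^{(n)}$ on $\p\Omega$ and $\omega^2$ is not an eigenvalue of (\ref{ForwardProblem}), one has
\[
  u_{\mathrm{A}}^{(n)}(\mz;\omega)=C_0\int_{\p\Omega}N(\mx,\mz;\omega)\bigl(u^{(n)}(\mx;\omega)-u_{\mathrm{B}}^{(n)}(\mx;\omega)\bigr)\,dS(\mx),
\]
where $N(\cdot,\mz;\omega)$ is the Neumann function of $\Omega$ and $C_0$ is constant.

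Next I would insert the leading-order thin-inhomogeneity asymptotic expansion of the boundary data $u^{(n)}-u_{\mathrm{B}}^{(n)}$ on $\p\Omega$ (as established in \cite{P-TD1,P-TD3} and the references cited there). In the purely dielectric case this has the form
\[
  u^{(n)}(\mx;\omega)-u_{\mathrm{B}}^{(n)}(\mx;\omega)=C_1 h\int_\sigma(\eps-\eps_0)\,u_{\mathrm{B}}^{(n)}(\my;\omega)\,N(\my,\mx;\omega)\,d\sigma(\my)+o(h),\qquad\mx\in\p\Omega,
\]
while in the purely magnetic case the strip geometry produces a rank-two polarization tensor $\mathbb{M}(\my)$, diagonal in the local frame $\{\mt(\my),\mn(\my)\}$, whose tangential entry is proportional to $\tfrac1\mu-\tfrac1{\mu_0}$ and whose normal entry is proportional to $\tfrac1{\mu_0}-\tfrac\mu{\mu_0^2}$, so that
\[
  u^{(n)}(\mx;\omega)-u_{\mathrm{B}}^{(n)}(\mx;\omega)=C_2 h\int_\sigma \nabla u_{\mathrm{B}}^{(n)}(\my;\omega)\cdot\mathbb{M}(\my)\,\nabla_\my N(\my,\mx;\omega)\,d\sigma(\my)+o(h).
\]
Substituting these into the representation formula, interchanging the curve and boundary integrations, and using reciprocity of $N$ leaves, in each case, an inner boundary integral of a product of (first derivatives of) Neumann functions. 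A Helmholtz--Kirchhoff-type identity, together with the non-eigenvalue hypothesis, reduces this inner integral to leading order in $h$ to a kernel that varies slowly in $\mz$ and $\my$ on the scale of a wavelength near $\sigma$; on the scale relevant to the imaging function it may therefore be treated as constant and pulled into $\sim$.

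It then remains to read off the $\mz$- and $\vt_n$-dependence. From $u_{\mathrm{B}}^{(n)}(\mx;\omega)=e^{i\omega\vt_n\cdot\mx}$ and $\nabla u_{\mathrm{B}}^{(n)}(\mx;\omega)=i\omega\vt_n e^{i\omega\vt_n\cdot\mx}$ we get $\overline{u_{\mathrm{B}}^{(n)}(\mz;\omega)}=e^{-i\omega\vt_n\cdot\mz}$ and $\overline{\nabla u_{\mathrm{B}}^{(n)}(\mz;\omega)}=-i\omega\vt_n e^{-i\omega\vt_n\cdot\mz}$, hence $u_{\mathrm{B}}^{(n)}(\my;\omega)\overline{u_{\mathrm{B}}^{(n)}(\mz;\omega)}=e^{i\omega\vt_n\cdot(\my-\mz)}$, while $\mt(\my)\cdot\nabla u_{\mathrm{B}}^{(n)}(\my;\omega)=i\omega\,(\vt_n\cdot\mt(\my))\,e^{i\omega\vt_n\cdot\my}$ and similarly for $\mn(\my)$. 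Plugging these into (\ref{TopologicalDerivative1}) and (\ref{TopologicalDerivative2}), pulling the scalars $i\omega$, $\overline{i\omega}$, $C_0$, $C_1$, $C_2$, $h$, and the constant from the collapsed kernel out in front, and applying $\mathrm{Re}$ last, produces exactly the two asserted relations; the tangential coefficient $2(\tfrac1\mu-\tfrac1{\mu_0})\vt_n\cdot\mt(\my)$ and the normal coefficient $2(\tfrac1{\mu_0}-\tfrac\mu{\mu_0^2})\vt_n\cdot\mn(\my)$ are precisely the entries of $\mathbb{M}(\my)$ contracted against $\nabla u_{\mathrm{B}}^{(n)}(\my;\omega)$ in the local frame.

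The step I expect to be the main obstacle is the collapse of the double curve/boundary integral in the second paragraph: making rigorous that, up to a $\mz$-independent multiplicative constant and to leading order as $h\to0$, it reduces to $\int_\sigma(\cdots)\,e^{i\omega\vt_n\cdot(\my-\mz)}\,d\sigma(\my)$. This needs a uniform-in-$h$ estimate of the $o(h)$ remainder of the thin-inhomogeneity expansion so that it cannot upset the $\sim$-equivalence, the correct Helmholtz--Kirchhoff-type identity for the Neumann function of $\Omega$ (equivalently, for the free-space fundamental solution under the radiation condition) to isolate the leading kernel, and the boundary-layer analysis near the thin set $\Gamma$ that yields the precise polarization coefficients $\tfrac1\mu-\tfrac1{\mu_0}$ and $\tfrac1{\mu_0}-\tfrac\mu{\mu_0^2}$; each ingredient is standard in the thin-inclusion literature, but tracking the constants cleanly is where the work lies. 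Since the imaging function (\ref{NormalizedTopologicalDerivative}) is normalized by its maximum, the undetermined constant $C$ in $\sim$ plays no role in the subsequent analysis.
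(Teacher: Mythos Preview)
The paper does not actually prove this lemma: it is introduced as ``the following result from \cite{P-TD3}'' and stated without argument, so there is no in-paper proof to compare your proposal against.

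For what it is worth, your outline matches the standard derivation in the cited thin-inclusion literature: represent $u_{\mathrm{A}}^{(n)}$ through the Neumann function of $\Omega$, feed in the leading-order asymptotic expansion of $u^{(n)}-u_{\mathrm{B}}^{(n)}$ on $\p\Omega$ (with the rank-two polarization tensor diagonal in the $\{\mt,\mn\}$ frame for the permeability contrast), collapse the resulting double integral via reciprocity and a Helmholtz--Kirchhoff-type identity, and read off the plane-wave phase $e^{i\omega\vt_n\cdot(\mx-\mz)}$ from $u_{\mathrm{B}}^{(n)}$ and its gradient. Your identification of the tangential and normal polarization entries with $\tfrac{1}{\mu}-\tfrac{1}{\mu_0}$ and $\tfrac{1}{\mu_0}-\tfrac{\mu}{\mu_0^2}$ is correct, and your closing remark that the normalization in (\ref{NormalizedTopologicalDerivative}) renders the undetermined constant in $\sim$ irrelevant is exactly how the lemma is consumed in Theorem~\ref{Theorem}. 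The one genuinely delicate step you flag---that the collapsed kernel can be treated as a $\mz$-independent constant---is in the references handled by the explicit small-argument/large-argument asymptotics of the Neumann function rather than a generic ``slowly varying'' argument, so if you wish to make the proof self-contained you should invoke those asymptotics directly rather than leave it heuristic.
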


Recent works \cite{P-TD3,AGKPS,P-SUB3,GP} have confirmed that multi-frequency applications guarantee better results than single-frequency applications. For this reason, we consider the following normalized multi-frequency based topological derivative imaging function. For several frequencies $\set{\omega_k:k=1,2,\cdots,K}$, we define
\begin{equation}\label{MultiFrequencyTopologicalDerivative}
  \mathbb{E}_{\mathrm{MF}}(\mz;K):=\frac{1}{K}\sum_{k=1}^{K}\mathbb{E}_{\mathrm{SF}}(\mz;\omega_k)=
  \frac{1}{2K}\sum_{k=1}^{K}\bigg(\frac{d_T\mathbb{E}_\eps(\mz;\omega_k)}{\max[d_T\mathbb{E}_\eps(\mz;\omega_k)]} +\frac{d_T\mathbb{E}_\mu(\mz;\omega_k)}{\max[d_T\mathbb{E}_\mu(\mz;\omega_k)]}\bigg),
\end{equation}
where $d_T\mathbb{E}_\eps(\mz;\omega_k)$ and $d_T\mathbb{E}_\mu(\mz;\omega_k)$ satisfy (\ref{TopologicalDerivative1}) and (\ref{TopologicalDerivative2}), respectively, at $\omega=\omega_k$ and $k=1,2,\cdots,K$.

\section{Analysis of imaging function with small number of incident directions}\label{sec:3}
In this section, we formalize (\ref{MultiFrequencyTopologicalDerivative}) through Lemma \ref{lemma}. Throughout this section, we assume the following form of $\vt_n$:
\begin{equation}\label{Direction}
\vt_n=[\cos(\theta_n),\sin(\theta_n)]^T=\left[\cos\left(\frac{2\pi(n-1)}{N}\right),\sin\left(\frac{2\pi(n-1)}{N}\right)\right]^T\quad\mbox{for}\quad n=1,2,\cdots,N,
\end{equation}
where the total number of incident directions $N$ is small. We then obtain the following main result.

\begin{thm}\label{Theorem}When $N$ is sufficiently small, we have
  \begin{multline}\label{Structure1}
  \mathbb{E}_{\mathrm{SF}}(\mz;\omega)\sim\sum_{n=1}^{N}\int_\sigma\bigg[(\eps-\eps_0)+2\bigg(\frac{1}{\mu}-\frac{1}{\mu_0}\bigg)\vt_n\cdot\mt(\mx)
    +2\bigg(\frac{1}{\mu_0}-\frac{\mu}{\mu_0^2}\bigg)\vt_n\cdot\mn(\mx)\bigg]\\
    \times\left(J_0(\omega|\mx-\mz|)
 +2\sum_{m=1}^{\infty}(-1)^mJ_{2 m}(\omega|\mx-\mz|)\cos\{2 m(\theta_n-\phi_\mz)\}\right)d\sigma(\mx)
 \end{multline}
  and
  \begin{multline}\label{Structure2}
  \mathbb{E}_{\mathrm{MF}}(\mz;K)\sim\sum_{n=1}^{N}\int_\sigma\bigg[(\eps-\eps_0)+2\bigg(\frac{1}{\mu}-\frac{1}{\mu_0}\bigg)\vt_n\cdot\mt(\mx)
    +2\bigg(\frac{1}{\mu_0}-\frac{\mu}{\mu_0^2}\bigg)\vt_n\cdot\mn(\mx)\bigg]\\
    \times\frac{1}{\omega_K-\omega_1}\left(\Lambda(|\mx-\mz|;K)
 +2\int_{\omega_1}^{\omega_K}\sum_{m=1}^{\infty}(-1)^mJ_{2 m}(\omega|\mx-\mz|)\cos\{2 m(\theta_n-\phi_\mz)\}d\omega\right)d\sigma(\mx),
 \end{multline}
 Here, $\Lambda(x;K)$ is defined as
\begin{multline}\label{FunctionStruve}
  \Lambda(x;K):=\omega_KJ_0(\omega_Kx)+\frac{\omega\pi}{2}\bigg(J_1(\omega_Kx)\mH_0(\omega_Kx)-J_0(\omega_Kx)\mH_1(\omega_Kx)\bigg)\\
  -\omega_1J_0(\omega_1x)+\frac{\omega\pi}{2}\bigg(J_1(\omega_1x)\mH_0(\omega_1x)-J_0(\omega_1x)\mH_1(\omega_1x)\bigg),
\end{multline}
where $J_n(x)$ denotes the Bessel function of order $n$ of the first kind and $\mH_n(x)$ denotes the Struve function of order $n$ (see \cite{AS-Book}).
\end{thm}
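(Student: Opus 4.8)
The plan is to combine Lemma~\ref{lemma} with the Jacobi--Anger expansion to obtain (\ref{Structure1}), and then to pass to the multi-frequency function (\ref{Structure2}) by replacing the frequency average with an integral and using a Struve-function primitive of $J_0$.

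First I would rewrite $\mathbb{E}_{\mathrm{SF}}$ as a single sum. The quantities $\max[d_T\mathbb{E}_\eps(\mz;\omega)]$ and $\max[d_T\mathbb{E}_\mu(\mz;\omega)]$ are positive constants that do not depend on the sampling point $\mz$, so the normalization in (\ref{NormalizedTopologicalDerivative}) only rescales each term by a constant; since the relation $A\sim B$ absorbs such constants, Lemma~\ref{lemma} gives
\[
\mathbb{E}_{\mathrm{SF}}(\mz;\omega)\sim\mathrm{Re}\sum_{n=1}^{N}\int_\sigma\bigg[(\eps-\eps_0)+2\bigg(\frac{1}{\mu}-\frac{1}{\mu_0}\bigg)\vt_n\cdot\mt(\mx)+2\bigg(\frac{1}{\mu_0}-\frac{\mu}{\mu_0^2}\bigg)\vt_n\cdot\mn(\mx)\bigg]e^{i\omega\vt_n\cdot(\mx-\mz)}\,d\sigma(\mx).
\]
Writing $\mx-\mz=|\mx-\mz|[\cos\phi_{\mz},\sin\phi_{\mz}]^{T}$ and recalling $\vt_n=[\cos\theta_n,\sin\theta_n]^{T}$ from (\ref{Direction}), we have $\vt_n\cdot(\mx-\mz)=|\mx-\mz|\cos(\theta_n-\phi_{\mz})$.

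Then I would apply the Jacobi--Anger identity $e^{iz\cos\psi}=J_0(z)+2\sum_{m=1}^{\infty}i^{m}J_m(z)\cos(m\psi)$, which converges absolutely and uniformly for $z$ and $\psi$ ranging over compact sets --- in particular uniformly for $\mx\in\sigma$, for $\mz$ in the bounded search region, and for $\omega$ in any bounded frequency band. Taking real parts kills the odd-order terms (for which $i^{m}$ is purely imaginary) and turns $i^{2m}$ into $(-1)^m$, so that $\mathrm{Re}\,e^{i\omega|\mx-\mz|\cos(\theta_n-\phi_{\mz})}=J_0(\omega|\mx-\mz|)+2\sum_{m=1}^{\infty}(-1)^{m}J_{2m}(\omega|\mx-\mz|)\cos\{2m(\theta_n-\phi_{\mz})\}$. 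Because the bracketed coefficient is real and bounded and the series converges uniformly on the finite-length curve $\sigma$, the real part and the arc-length integral may be interchanged with the series, which yields (\ref{Structure1}). For (\ref{Structure2}), when the frequencies $\{\omega_k:k=1,\dots,K\}$ are sampled densely in $[\omega_1,\omega_K]$ the average $\frac{1}{K}\sum_{k=1}^{K}\mathbb{E}_{\mathrm{SF}}(\mz;\omega_k)$ is approximated by $\frac{1}{\omega_K-\omega_1}\int_{\omega_1}^{\omega_K}\mathbb{E}_{\mathrm{SF}}(\mz;\omega)\,d\omega$. Substituting (\ref{Structure1}) and interchanging the $\omega$-integration with the (uniformly convergent) series and with $\int_\sigma d\sigma(\mx)$, every term of order $2m\ge 2$ stays under the frequency integral exactly as in (\ref{Structure2}), and only the $J_0$-term must be integrated explicitly. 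For this I would use the primitive $\int J_0(t)\,dt=tJ_0(t)+\frac{\pi t}{2}\big(J_1(t)\mH_0(t)-J_0(t)\mH_1(t)\big)$ (see \cite{AS-Book}); with the substitution $t=\omega|\mx-\mz|$ and evaluation at $\omega=\omega_1$ and $\omega=\omega_K$ this produces exactly $\Lambda(|\mx-\mz|;K)$ as defined in (\ref{FunctionStruve}), completing the proof.

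The main obstacle will be the rigorous justification of the term-by-term operations: one must establish uniform convergence of the Jacobi--Anger series jointly in $\mx\in\sigma$, in $\mz$ over the search domain, and in $\omega\in[\omega_1,\omega_K]$, so that Fubini's theorem legitimizes exchanging the arc-length integral, the frequency integral, and the infinite sum. A secondary point is the handling of the two separately normalized contributions $d_T\mathbb{E}_\eps$ and $d_T\mathbb{E}_\mu$ within a single $\sim$, together with the careful bookkeeping of the endpoint contributions that assemble $\Lambda$ from the Struve-function primitive of $J_0$.
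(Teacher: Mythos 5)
Your proposal is correct and follows essentially the same route as the paper's own proof: Lemma~\ref{lemma} plus the Jacobi--Anger expansion with $\vt_n\cdot(\mx-\mz)=|\mx-\mz|\cos(\theta_n-\phi_\mz)$, taking real parts to retain only the even-order terms $(-1)^mJ_{2m}$, and then passing from the frequency average to $\frac{1}{\omega_K-\omega_1}\int_{\omega_1}^{\omega_K}(\cdot)\,d\omega$ using the Struve-function primitive of $J_0$ to produce $\Lambda$. Your added remarks on uniform convergence and on the $\mz$-independence of the normalizing maxima only make explicit what the paper leaves implicit.
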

\begin{proof}
 First, let us consider the following term:
 \[\frac{d_T\mathbb{E}_\eps(\mz;\omega)}{\max[d_T\mathbb{E}_\eps(\mz;\omega)]}\approx\mathrm{Re}\bigg(\sum_{n=1}^{N}\int_\sigma(\eps-\eps_0)e^{i\omega\vt_n\cdot(\mx-\mz)}d\sigma(\mx)\bigg).\]
As $N$ is small, the above formula cannot be expressed in integral form, so we must find an alternative representation. Setting $\mx-\mz=r[\cos(\phi_\mz),\sin(\phi_\mz)]^T$, we have $\vt_n\cdot(\mx-\mz)=r\cos(\theta_n-\phi_\mz)$, and the following Jacobi-Anger expansion holds uniformly:
 \begin{equation}\label{JacobiAnger}
   e^{ir\cos\theta}=\sum_{m=-\infty}^{\infty}i^mJ_m(r)e^{im\phi_\mz}
 \end{equation}
 By elementary calculus, we can evaluate
 \begin{multline*}
   \int_\sigma(\eps-\eps_0)e^{i\omega\vt_n\cdot(\mx-\mz)}d\sigma(\mx)=\int_\sigma(\eps-\eps_0)\sum_{m=-\infty}^{\infty}i^mJ_m(\omega|\mx-\mz|)e^{im(\theta_n-\phi_\mz)}d\sigma(\mx)\\
   =\int_\sigma(\eps-\eps_0)\left(J_0(\omega|\mx-\mz|)
 +2\sum_{m=1}^{\infty}i^mJ_m(\omega|\mx-\mz|)\cos\{m(\theta_n-\phi_\mz)\}\right)d\sigma(\mx).
 \end{multline*}
 Hence,
 \begin{multline}\label{term1}
   \frac{d_T\mathbb{E}_\eps(\mz;\omega)}{\max[d_T\mathbb{E}_\eps(\mz;\omega)]}\approx\left.\sum_{n=1}^{N}\int_\sigma(\eps-\eps_0)\right(J_0(\omega|\mx-\mz|)\\
 \left.+2\sum_{m=1}^{\infty}(-1)^mJ_{2 m}(\omega|\mx-\mz|)\cos\{2 m(\theta_n-\phi_\mz)\}\right)d\sigma(\mx).
 \end{multline}
 Similarly, we can evaluate
 \begin{multline}\label{term2}
   \frac{d_T\mathbb{E}_\mu(\mz;\omega)}{\max[d_T\mathbb{E}_\mu(\mz;\omega)]}=\sum_{n=1}^{N}\int_\sigma\bigg[2\bigg(\frac{1}{\mu}-\frac{1}{\mu_0}\bigg)\vt_n\cdot\mt(\mx)+2\bigg(\frac{1}{\mu_0}-\frac{\mu}{\mu_0^2}\bigg)\vt_n\cdot\mn(\mx)\bigg]\\
   \times\left(J_0(\omega|\mx-\mz|)+2\sum_{m=1}^{\infty}(-1)^mJ_{2 m}(\omega|\mx-\mz|)\cos\{2 m(\theta_n-\phi_\mz)\}\right)d\sigma(\mx).
 \end{multline}
 Combining (\ref{term1}) and (\ref{term2}), we obtain (\ref{Structure1}).

Equation (\ref{Structure2}) is then easily obtained from (\ref{term1}), (\ref{term2}) and the following indefinite integration
 \[\int J_0(x)dx=xJ_0(x)+\frac{\pi x}{2}\bigg(J_1(x)\mH_0(x)-J_0(x)\mH_1(x)\bigg),\].
This completes the proof.
\end{proof}

From the  structure identified in Theorem \ref{Theorem}, we obtain the following result. The proof is very similar to that of \cite[Theorem 3.4]{AJMP}.

\begin{cor}\label{Corollary}
  Let
  \[\mathbb{D}_{\mathrm{SF}}(\mx,\mz;\omega):=\sum_{m=1}^{\infty}(-1)^mJ_{2 m}(\omega|\mx-\mz|)\cos\{2 m(\theta_n-\phi_\mz)\}\]
  and
  \[\mathbb{D}_{\mathrm{MF}}(\mx,\mz;K)=\frac{1}{\omega_K-\omega_1}\int_{\omega_1}^{\omega_K}\sum_{m=1}^{\infty}(-1)^mJ_{2 m}(\omega|\mx-\mz|)\cos\{2 m(\theta_n-\phi_\mz)\}d\omega.\]
 If $M\in\mathbb{N}$ is sufficiently large and $\mz$ is close to $\mx$ such that $0<\omega_k|\mx-\mz|\ll\sqrt{M+1}$, then
  \[|\mathbb{D}_{\mathrm{MF}}(\mx,\mz;K)|\ll|\mathbb{D}_{\mathrm{SF}}(\mx,\mz;\omega_K)|\].
Conversely, if $\mz$ is far from $\mx$ such that $\omega_k|\mx-\mz|\gg|M^2-0.25|$, then
  \[|\mathbb{D}_{\mathrm{MF}}(\mx,\mz;K)|\ll|\mathbb{D}_{\mathrm{SF}}(\mx,\mz;\omega_1)|.\]
\end{cor}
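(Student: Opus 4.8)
The plan is to imitate the proof of \cite[Theorem 3.4]{AJMP}, the key idea being to reduce each of $\mathbb{D}_{\mathrm{SF}}$ and $\mathbb{D}_{\mathrm{MF}}$ to a single dominant contribution and then compare. In both quantities the angular weights $\cos\{2m(\theta_n-\phi_\mz)\}$ are bounded by $1$ and carry no decay in $m$, so the behaviour of the series is dictated entirely by the Bessel factors $J_{2m}(\omega|\mx-\mz|)$. The first step is therefore to truncate the series at order $m=M$: under $0<\omega_k|\mx-\mz|\ll\sqrt{M+1}$ the elementary bound $|J_\nu(t)|\le(t/2)^\nu\big(\Gamma(\nu+1)\big)^{-1}$ makes $\sum_{m>M}(-1)^mJ_{2m}(\omega_k|\mx-\mz|)\cos\{2m(\theta_n-\phi_\mz)\}$ super-exponentially small and leaves $m=1$ as the leading term; under $\omega_k|\mx-\mz|\gg|M^2-\tfrac14|$ the same role is played by the large-argument expansion of $J_{2m}$ together with uniform boundedness of the partial sums $\sum_{m=1}^{M}\cos\{2m(\theta_n-\phi_\mz)\}$. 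With the series replaced by these finite sums I would then treat the two regimes separately.

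\emph{Regime $\mz$ close to $\mx$.} Using $J_{2m}(t)=(t/2)^{2m}\big(\Gamma(2m+1)\big)^{-1}\big(1+O(t^2)\big)$, the truncated sum is dominated by its $m=1$ term, so $\mathbb{D}_{\mathrm{SF}}(\mx,\mz;\omega)\approx-\tfrac18(\omega|\mx-\mz|)^2\cos\{2(\theta_n-\phi_\mz)\}$. Inserting the same leading behaviour into the definition of $\mathbb{D}_{\mathrm{MF}}$ and using $\frac{1}{\omega_K-\omega_1}\int_{\omega_1}^{\omega_K}\omega^2\,d\omega=\tfrac13(\omega_K^2+\omega_K\omega_1+\omega_1^2)$ gives $\mathbb{D}_{\mathrm{MF}}(\mx,\mz;K)\approx-\tfrac{1}{24}|\mx-\mz|^2(\omega_K^2+\omega_K\omega_1+\omega_1^2)\cos\{2(\theta_n-\phi_\mz)\}$. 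Dividing, $|\mathbb{D}_{\mathrm{MF}}(\mx,\mz;K)|/|\mathbb{D}_{\mathrm{SF}}(\mx,\mz;\omega_K)|$ is, to leading order, $(\omega_K^2+\omega_K\omega_1+\omega_1^2)/(3\omega_K^2)$, which is at most $1$; promoting this to the strict inequality $|\mathbb{D}_{\mathrm{MF}}(\mx,\mz;K)|\ll|\mathbb{D}_{\mathrm{SF}}(\mx,\mz;\omega_K)|$ is precisely the point where one must retain the higher orders of the truncated sum and exploit that $\omega_k|\mx-\mz|$ is small while $M$ is large, exactly as in \cite{AJMP}.

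\emph{Regime $\mz$ far from $\mx$.} Here I would use $J_{2m}(t)\sim\sqrt{2/(\pi t)}\,\cos\big(t-m\pi-\tfrac\pi4\big)=(-1)^m\sqrt{2/(\pi t)}\,\cos\big(t-\tfrac\pi4\big)$, valid for $t\gg|\nu^2-\tfrac14|$ with $\nu=2m$, which the hypothesis $\omega_k|\mx-\mz|\gg|M^2-\tfrac14|$ ensures for every retained order. Then $(-1)^mJ_{2m}(\omega|\mx-\mz|)$ is, to leading order, independent of $m$, so $\mathbb{D}_{\mathrm{SF}}(\mx,\mz;\omega)\approx S_M\sqrt{2/(\pi\omega|\mx-\mz|)}\,\cos\big(\omega|\mx-\mz|-\tfrac\pi4\big)$ with $S_M:=\sum_{m=1}^{M}\cos\{2m(\theta_n-\phi_\mz)\}$, and $\mathbb{D}_{\mathrm{MF}}(\mx,\mz;K)$ is this expression averaged over $\omega\in[\omega_1,\omega_K]$. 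The surviving oscillatory integral $\int_{\omega_1}^{\omega_K}\omega^{-1/2}\cos\big(\omega|\mx-\mz|-\tfrac\pi4\big)\,d\omega$ is $O\big(|\mx-\mz|^{-1}\omega_1^{-1/2}\big)$ by one integration by parts (the boundary term at $\omega_1$ dominating), while $|\mathbb{D}_{\mathrm{SF}}(\mx,\mz;\omega_1)|\sim|S_M|\sqrt{2/(\pi\omega_1|\mx-\mz|)}$. Hence $|\mathbb{D}_{\mathrm{MF}}(\mx,\mz;K)|\lesssim\big((\omega_K-\omega_1)|\mx-\mz|\big)^{-1}|\mathbb{D}_{\mathrm{SF}}(\mx,\mz;\omega_1)|$, and since $\omega_k|\mx-\mz|\gg|M^2-\tfrac14|$ forces $(\omega_K-\omega_1)|\mx-\mz|\gg1$, we obtain $|\mathbb{D}_{\mathrm{MF}}(\mx,\mz;K)|\ll|\mathbb{D}_{\mathrm{SF}}(\mx,\mz;\omega_1)|$.

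I expect the main obstacle to lie not in the heuristics above but in making them rigorous with constants uniform in $\omega\in[\omega_1,\omega_K]$: bounding the Bessel tails $\sum_{m>M}$ uniformly, estimating the far-regime oscillatory integral while guaranteeing that $S_M$ does not degenerate at the exceptional angles $\theta_n-\phi_\mz\in\tfrac\pi2\mathbb{Z}$ (where $\cos\{2m(\theta_n-\phi_\mz)\}$ is constant in $m$), and, in the close regime, turning the leading-order bound $\le1$ into the genuine $\ll1$ asserted in the statement. Each of these is handled as in the proof of \cite[Theorem 3.4]{AJMP}, whose argument transfers once $\mathbb{D}_{\mathrm{SF}}$ and $\mathbb{D}_{\mathrm{MF}}$ are written in the reduced forms above.
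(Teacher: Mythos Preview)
The paper supplies no proof of this corollary beyond the remark that it is ``very similar to that of \cite[Theorem 3.4]{AJMP}'', so your decision to imitate that reference is, by construction, the same approach the paper takes. Your outline---truncate the series at $m=M$ and then compare the dominant Bessel contributions via the small-argument expansion in the close regime and the large-argument asymptotic in the far regime---is the expected structure of such an argument.

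Two cautions on the details you flag. In the close regime the obstacle you identify is genuine: your leading-order ratio $(\omega_K^2+\omega_K\omega_1+\omega_1^2)/(3\omega_K^2)$ lies in $[1/3,1]$, and higher-order Bessel corrections (all of which are even smaller powers of $\omega|\mx-\mz|$) do not push it below any fixed threshold; the symbol $\ll$ in the statement is therefore to be read as ``strictly smaller'' rather than ``asymptotically negligible'', which is how the analogous inequality in \cite{AJMP} is interpreted. In the far regime, your step $(\omega_K-\omega_1)|\mx-\mz|\gg1$ does not follow from the stated hypothesis $\omega_k|\mx-\mz|\gg|M^2-\tfrac14|$ alone---it tacitly assumes the frequency spread $\omega_K-\omega_1$ is comparable to $\omega_1$, which is the standing regime in multi-frequency imaging but is not written into the corollary. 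Similarly, the large-argument asymptotic for $J_{2m}$ needs $t\gg 4m^2-\tfrac14$, a factor $4$ stronger than the hypothesis at $m=M$, though this is harmlessly absorbed into the $\gg$.
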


From the results derived in Theorem \ref{Theorem}, we observe that $\mathbb{E}_{\mathrm{SF}}(\mz;\omega)\approx1$ and $\mathbb{E}_{\mathrm{MF}}(\mz;K)\approx1$ at $\mz\in\Gamma$. However, owing to the small oscillation of $\mathbb{E}_{\mathrm{MF}}(\mz;K)$ and the term $\mathbb{D}_{\mathrm{MF}}(\mx,\mz;K)$, which disturbs the imaging performance less than $\mathbb{D}_{\mathrm{SF}}(\mx,\mz;\omega)$, the multi-frequency application should guarantee better imaging performance than single-frequency application.

Now, let us consider the conditions that guarantee good results. Eliminating the disturbance terms
\[\sum_{n=1}^{N}\sum_{m=1}^{\infty}(-1)^mJ_{2 m}(\omega|\mx-\mz|)\cos\{2 m(\theta_n-\phi_\mz)\}\]
or
\[\sum_{k=1}^{K}\left(\sum_{n=1}^{N}\sum_{m=1}^{\infty}(-1)^mJ_{2 m}(\omega_k|\mx-\mz|)\cos\{2 m(\theta_n-\phi_\mz)\}\right)\]
from the identified structures (\ref{Structure1}) and (\ref{Structure2}) will improve the imaging result. In the simplest approach, we set $\omega$ to $+\infty$. The resulting asymptotic form of the Bessel function
\[J_{2 m}(\omega|\mx-\mz|)\approx\sqrt{\frac{2}{\pi\omega|\mx-\mz|}}\cos\left(\omega|\mx-\mz|-m\pi-\frac{\pi}{4}\right)\longrightarrow0,\]
eliminates the disturbance terms. However, this is an ideal condition, and another elimination approach is needed in practice.

From the above observation, and the lack of \textit{a priori} information of $\Gamma?s$ location, we conclude that we cannot control $J_{2 m}(\omega|\mx-\mz|)$ for any $\omega$. Therefore, to eliminate the disturbance terms, we focus on the term $\cos\{2 m(\theta_n-\phi_\mz)\}$, where $n=1,2,\cdots,N$. The value of $\phi_\mz$ is of course unknown, but the following properties
\[\cos\{2 m(\theta_n-\phi_\mz)\}=\cos(2 m\theta_n)\cos(2 m\phi_\mz)+\sin(2 m\theta_n)\sin(2 m\phi_\mz)\]
and
\[\cos(\pi+\theta_n)=-\cos(\theta_n),\quad\sin(\pi+\theta_n)=-\sin(\theta_n),\]
imply that symmetric incident directions will guarantee superior results. Consequently, the total number of incident directions $N$ must be even, say $N=2L$, and $\vt_n$ in (\ref{Direction}) must satisfy $\vt_n=-\vt_{n+L}$ for $n=1,2,\cdots,L$.

From the above discussion, we conclude that for each applied incident direction $\vt_1$, the opposite direction $-\vt_1$ must also be applied. However, as these vectors do not span $\mathbb{S}^1$, another incident direction $\vt_2$, which is independent of $\vt_1$ (and its correspondingly $-\vt_2$), is required. To guarantee a good result, we need at least $4$ incident directions. Moreover, the directions $\vt_n$ must be distributed uniformly on $\mathbb{S}^1$. Thus, setting $\pm\vt_1=\pm\me_1=[\pm1,0]^T$ and $\pm\vt_2=\pm\me_2=[0,\pm1]^T$ should achieve successful imaging performance.

\section{Simulation results}\label{sec:4}
In this section, the derivations of Section \ref{sec:3} are supported by the results of numerical simulations. The simulated homogeneous domain $\Omega$ was a unit circle centered at the origin in $\mathbb{R}^2$, and the supporting curves of the thin inhomogeneities $\Gamma_j$ were described by two $\sigma_js$:
\begin{align}
\begin{aligned}\label{SupportingCurve}
\sigma_1&=\set{[s-0.2,-0.5s^2+0.5]^T:-0.5\leq s\leq0.5}\\
\sigma_2&=\set{[s+0.2,s^3+s^2-0.6]^T:-0.5\leq s\leq0.5}.
\end{aligned}
\end{align}
The thickness $h$ of the thin inhomogeneity $\Gamma_j$ was set to $0.02$, and the parameters $\eps_0$ and $\mu_0$ were set to $1$. In this section, we denote the permittivity and permeability of $\Gamma_j$ as $\eps_j$ and $\mu_j$, respectively, where $j=1,2$. The applied frequency was $\omega_k=2\pi/\lambda_k$ at wavelength $\lambda_k$, $k=1,2,\cdots,K(=10)$. We set $\omega=2\pi/0.5$ for single-frequency imaging and  $\lambda_1=0.7$ and $\lambda_{K}=0.3$ for multi-frequency imaging. To demonstrate the robustness of the proposed algorithm, we added a white Gaussian noise with a signal-to-noise ratio (SNR) of $20$dB to the unperturbed boundary data $u^{(l)}(\mathbf{x};\omega_k)$. The noise was imposed by the standard MATLAB command `awgn'. In this section, we consider both permittivity and permeability contrast with $\eps_j=\mu_j=5$ for $j=1,2,$ and $3$.

We first consider the imaging of $\Gamma_1$. Maps of $\mathbb{E}_{\mathrm{SF}}(\mz;\omega)$ for $N=4$ and $N=5$ are displayed in Figure \ref{Gamma1-Single}. Note that when $N$ is small and odd,  $\mathbb{E}_{\mathrm{SF}}(\mz;\omega)$ yields very poor results. Although the results are vastly improved when $N$ is even, the map of $\mathbb{E}_{\mathrm{SF}}(\mz;\omega)$ cannot identify the shape outline of $\Gamma_1$ unless $N$ is increased.

\begin{figure}[h]
\begin{center}
\includegraphics[width=0.325\textwidth]{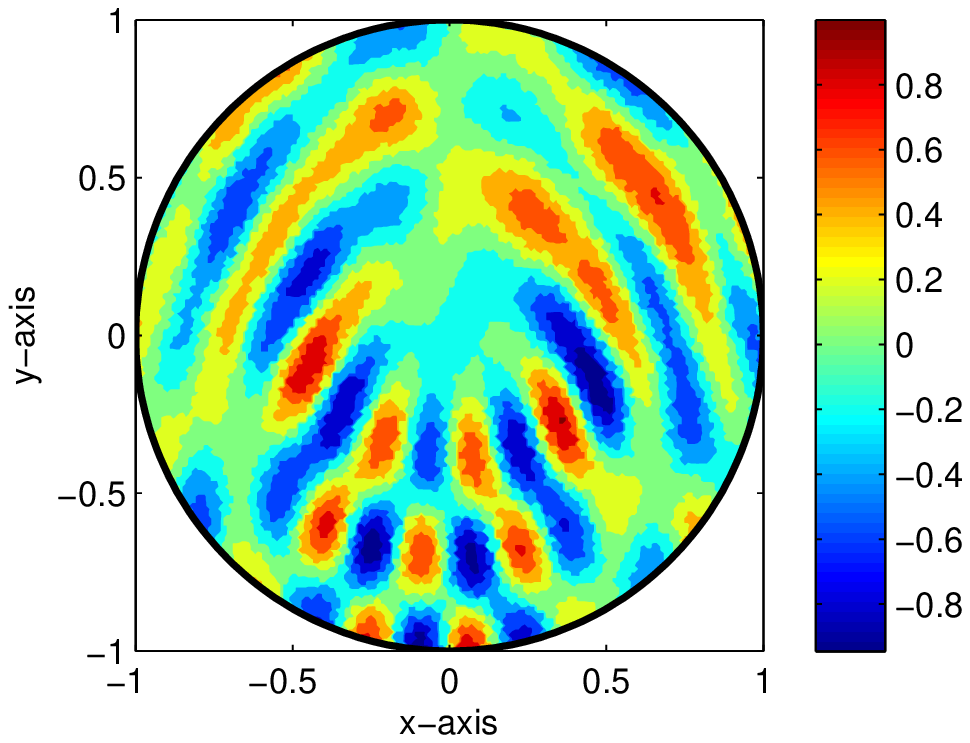}
\includegraphics[width=0.325\textwidth]{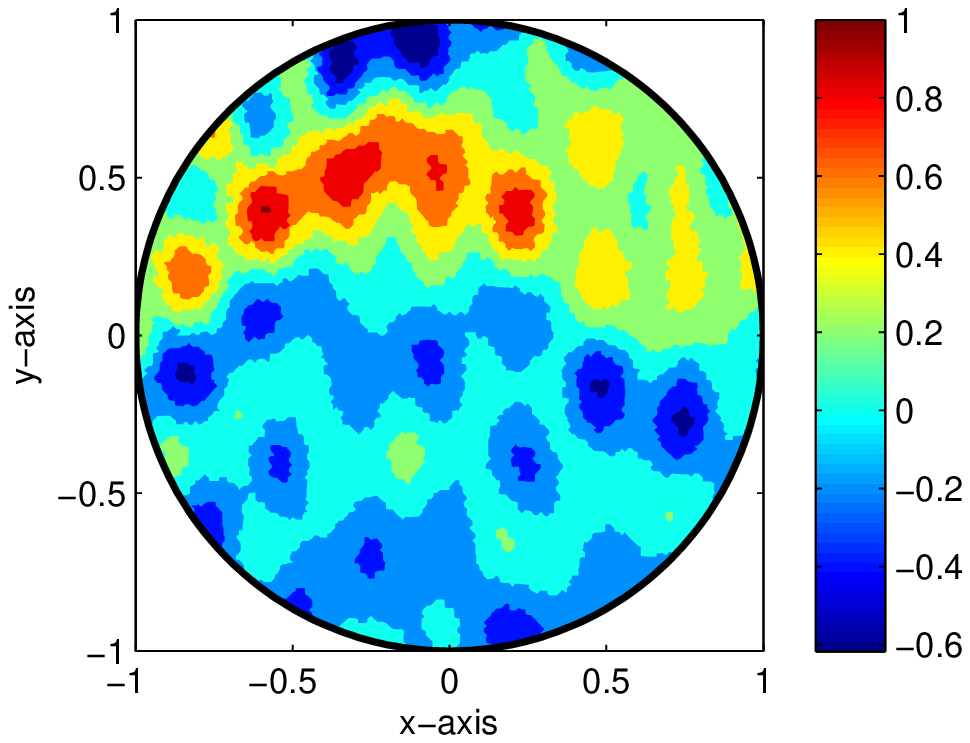}
\includegraphics[width=0.325\textwidth]{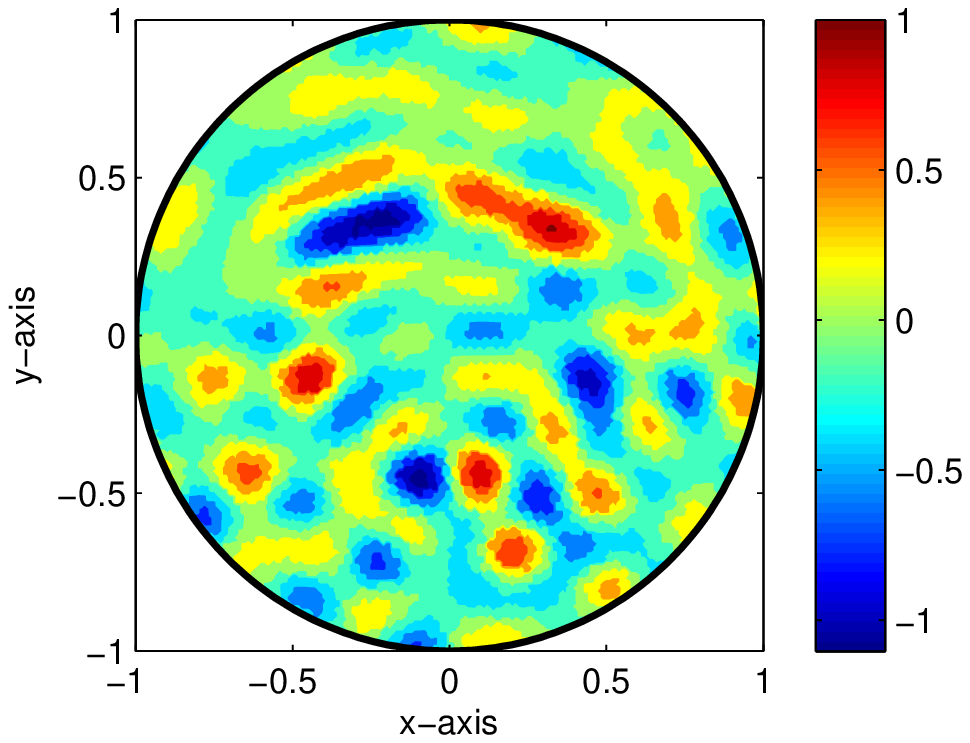}\\
\includegraphics[width=0.325\textwidth]{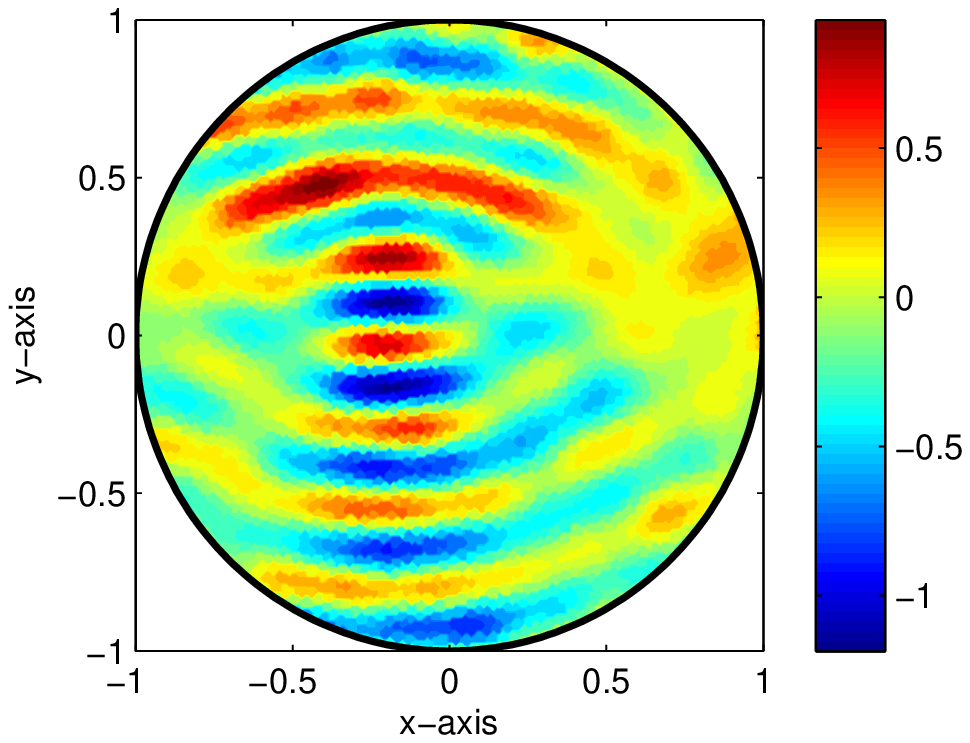}
\includegraphics[width=0.325\textwidth]{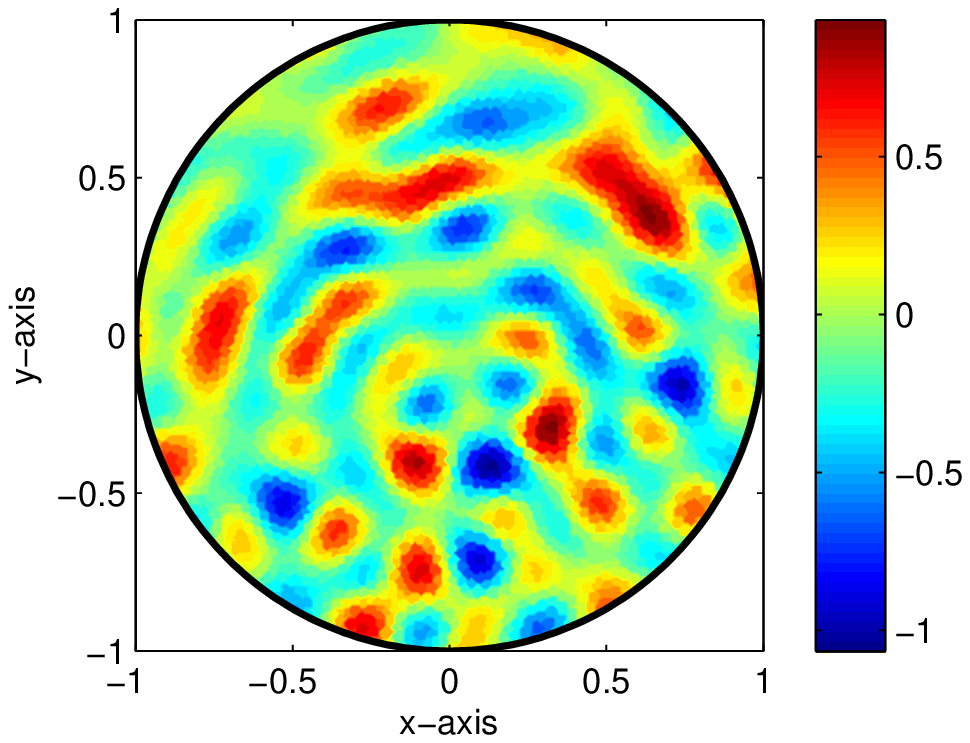}
\includegraphics[width=0.325\textwidth]{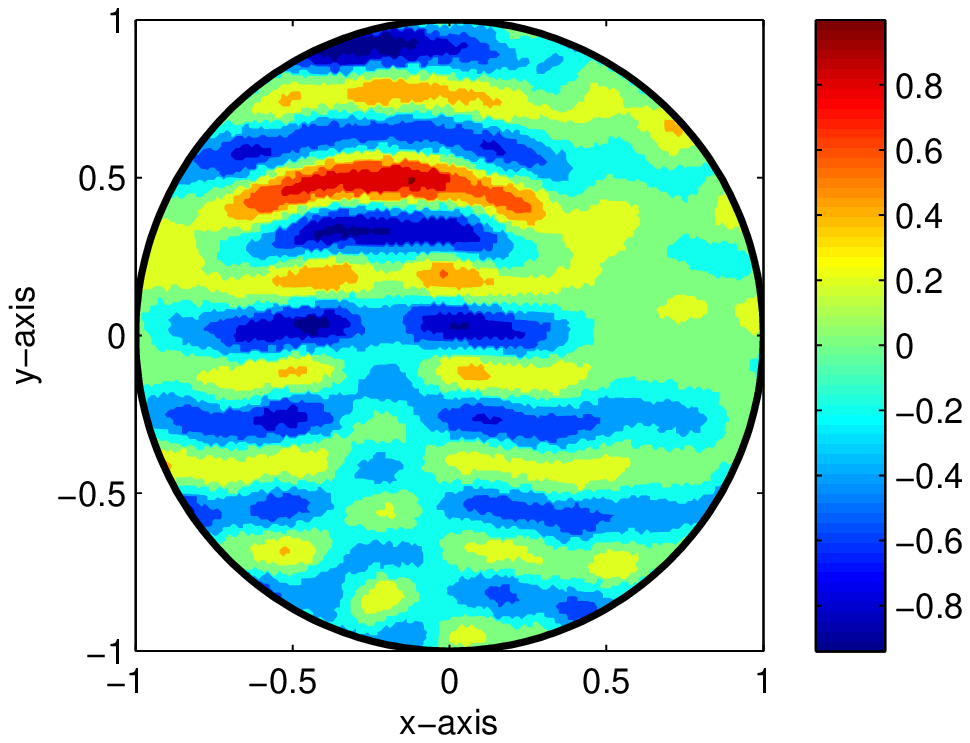}
\caption{Maps of $\mathbb{E}_{\mathrm{SF}}(\mz;2\pi/0.5)$ for $N=1$ (top left), $N=2$ (top right), $N=3$ (middle left), $N=4$ (middle right), $N=5$ (bottom left), and $N=6$ (bottom right) on a thin inhomogeneity of $\Gamma_1$.}\label{Gamma1-Single}
\end{center}
\end{figure}

Recent work \cite{P-TD1} has shown that when $N$ is small and $K$ is sufficiently large, the target shapes can be recognized from the map of $\mathbb{E}_{\mathrm{MF}}(\mz;K)$. Figure \ref{Gamma1-Multi} shows maps of $\mathbb{E}_{\mathrm{MF}}(\mz;10)$ for $N=4$ and $N=5$. As expected, the results accurately capture the true shape of $\Gamma_1$. Interestingly, when $N=5$, the shape of $\Gamma_1$ is clearly outlined despite the large number of artifacts.

\begin{figure}[h]
\begin{center}
\includegraphics[width=0.325\textwidth]{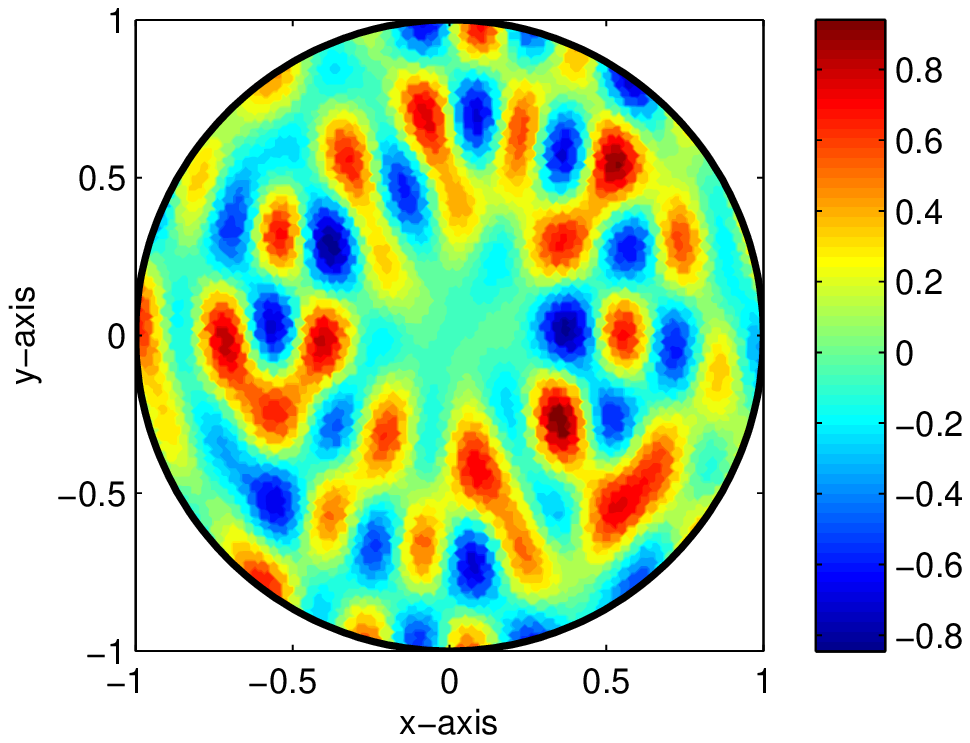}
\includegraphics[width=0.325\textwidth]{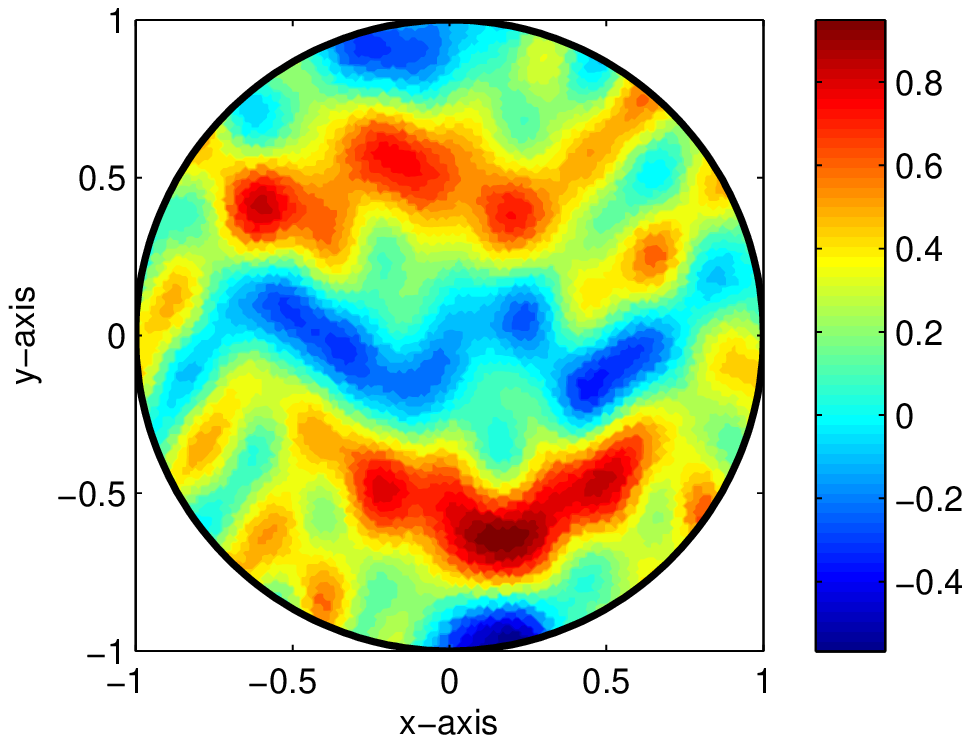}
\includegraphics[width=0.325\textwidth]{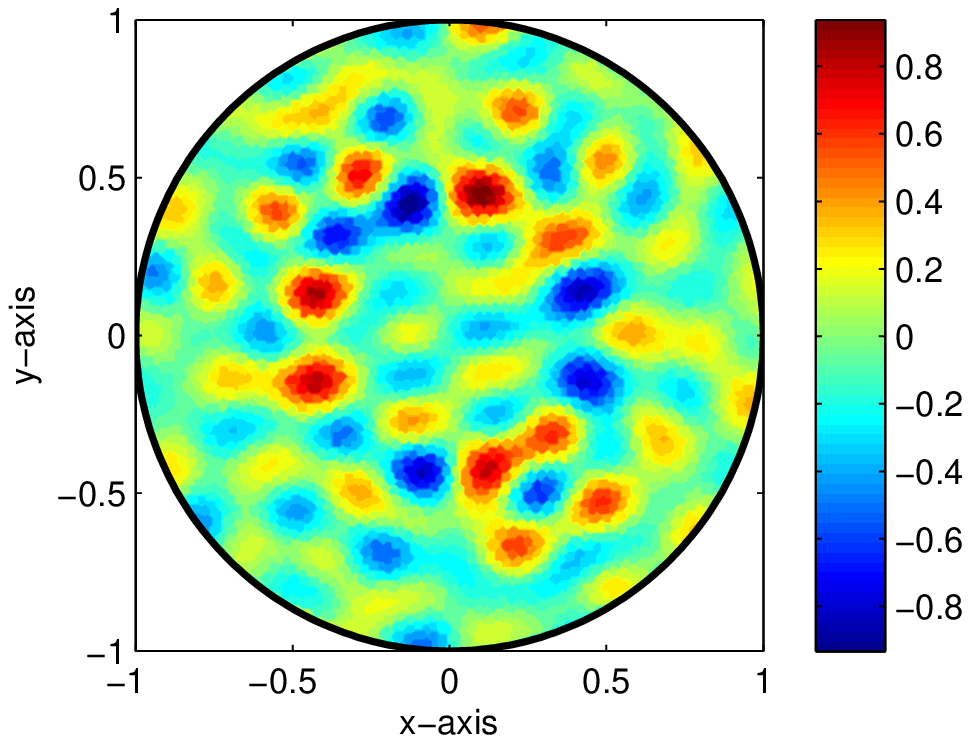}\\
\includegraphics[width=0.325\textwidth]{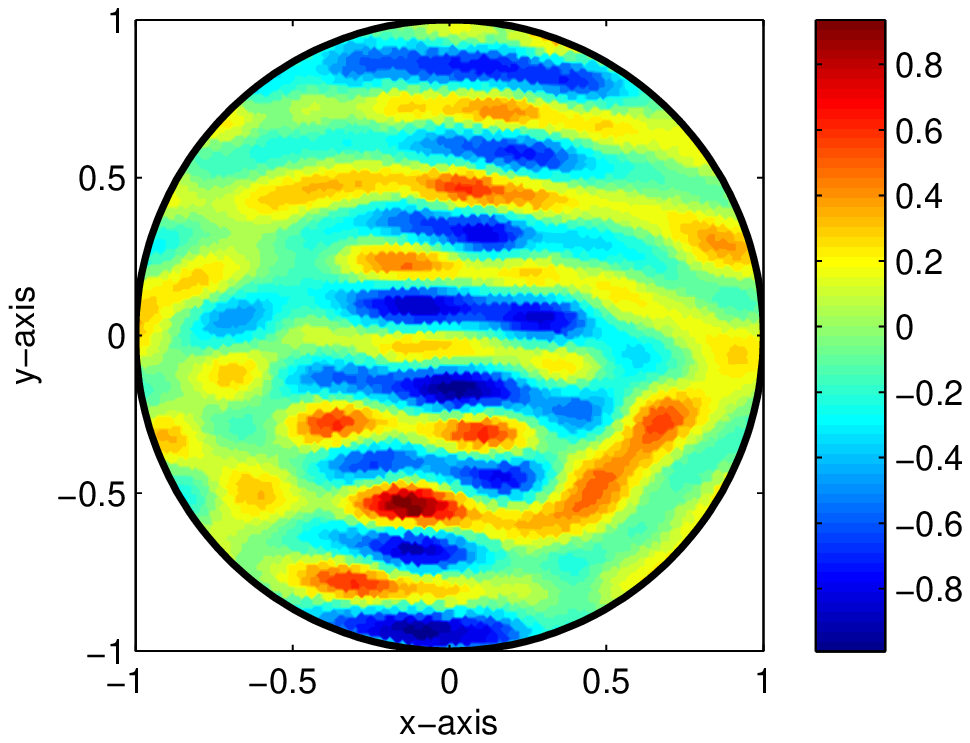}
\includegraphics[width=0.325\textwidth]{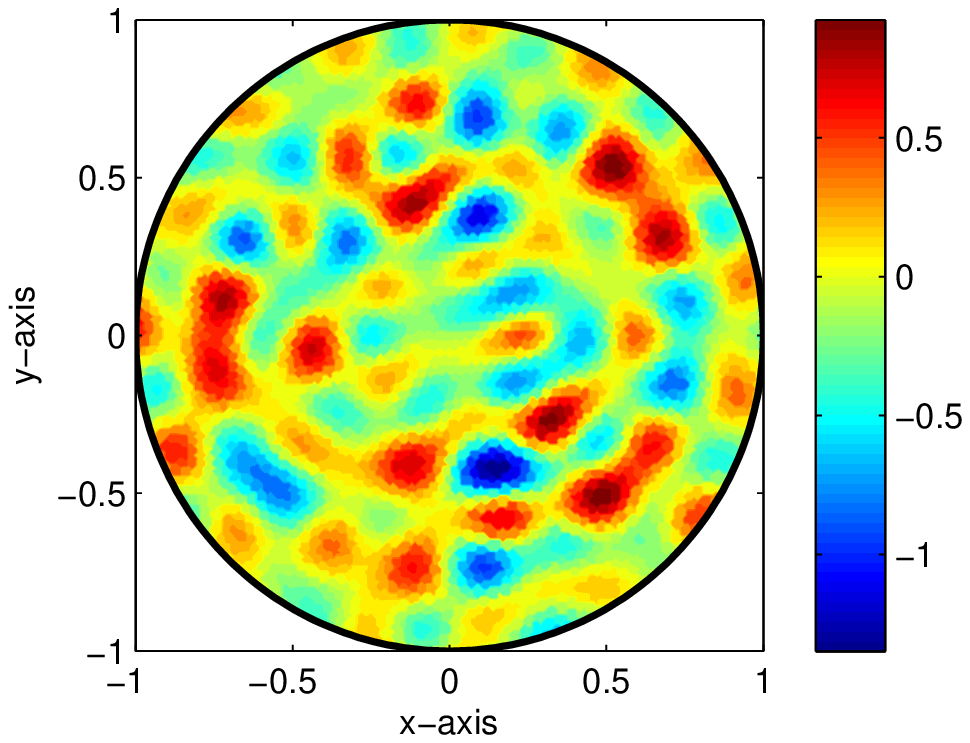}
\includegraphics[width=0.325\textwidth]{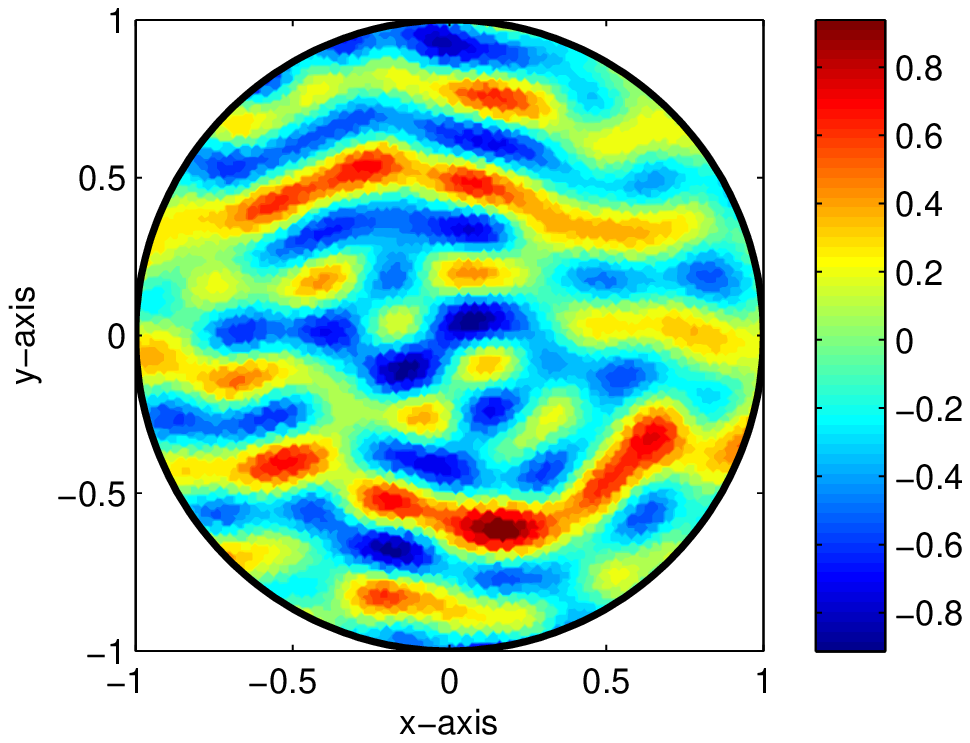}
\caption{Maps of $\mathbb{E}_{\mathrm{MF}}(\mz;10)$ for $N=1$ (top, left), $N=2$ (top, right), $N=3$ (middle, left), $N=4$ (middle, right), $N=5$ (bottom, left), and $N=6$ (bottom, right) on a thin inhomogeneity of $\Gamma_1\cup\Gamma_2$.}\label{Gamma1-Multi}
\end{center}
\end{figure}

Maps of $\mathbb{E}_{\mathrm{SF}}(\mz;\omega)$ and $\mathbb{E}_{\mathrm{MF}}(\mz;K)$ on a thin inhomogeneity of $\Gamma_2$ are displayed in Figs. \ref{Gamma2-Single} and \ref{Gamma2-Multi}, respectively. The imaging exhibits similar phenomena to the imaging on $\Gamma_1$.However, $N=5$ cannot sufficiently resolve the shape outline of $\Gamma_2$, even when $K$ is sufficiently large.

\begin{figure}[h]
\begin{center}
\includegraphics[width=0.325\textwidth]{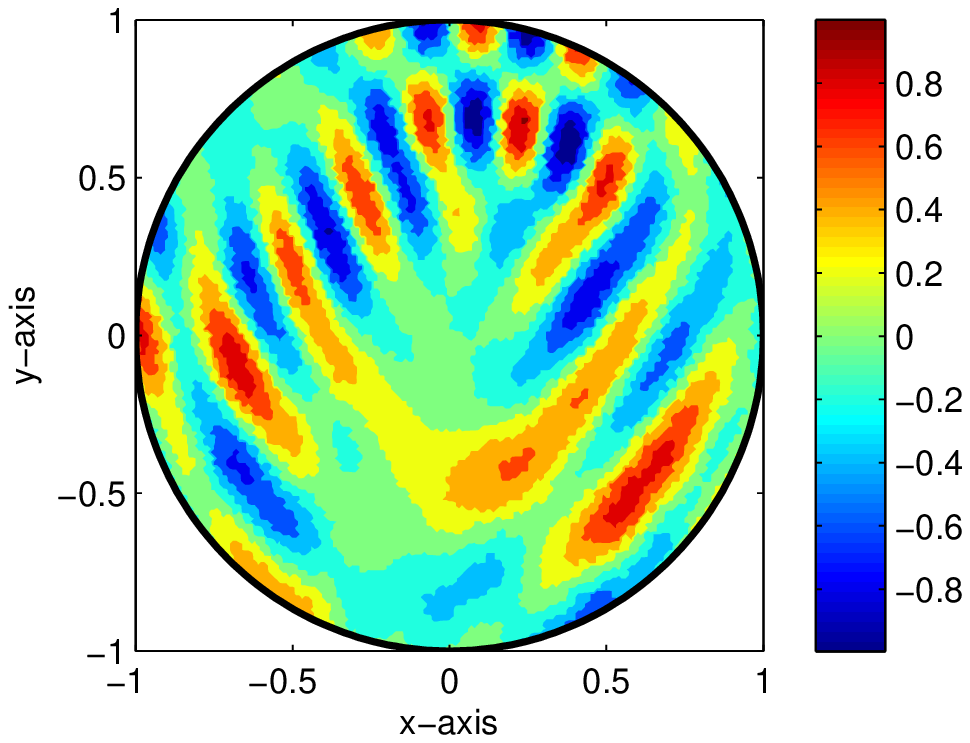}
\includegraphics[width=0.325\textwidth]{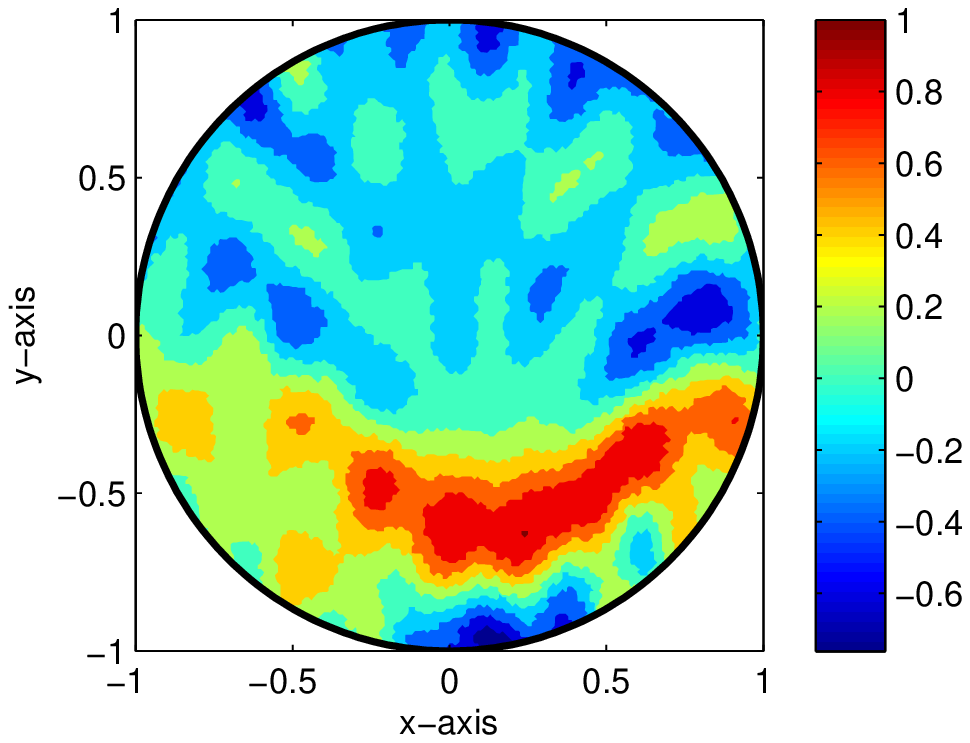}
\includegraphics[width=0.325\textwidth]{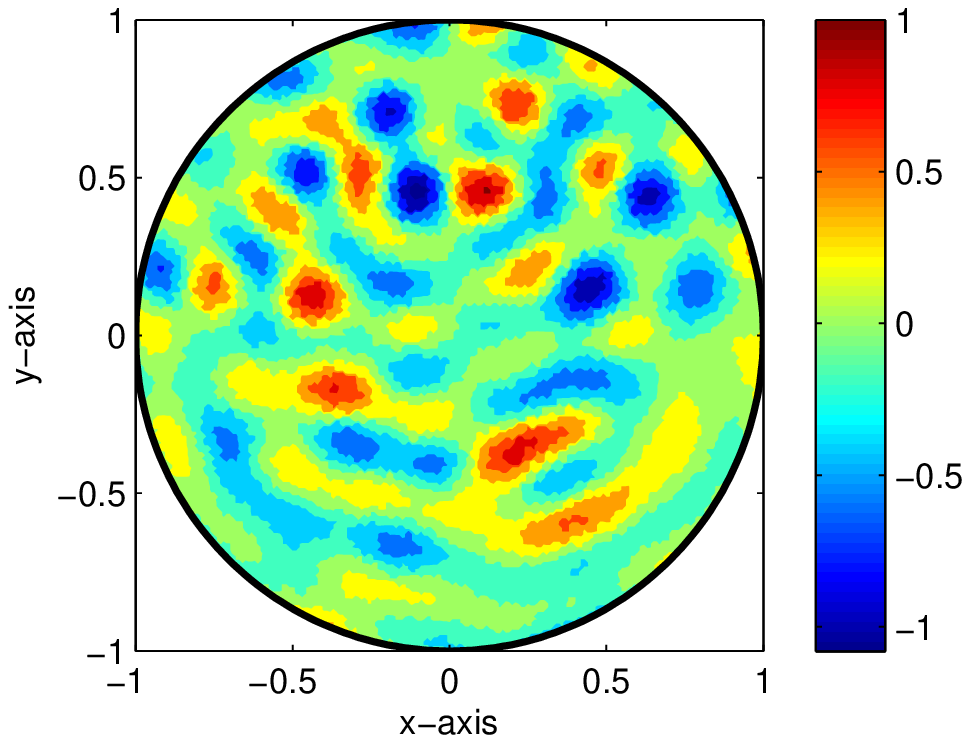}\\
\includegraphics[width=0.325\textwidth]{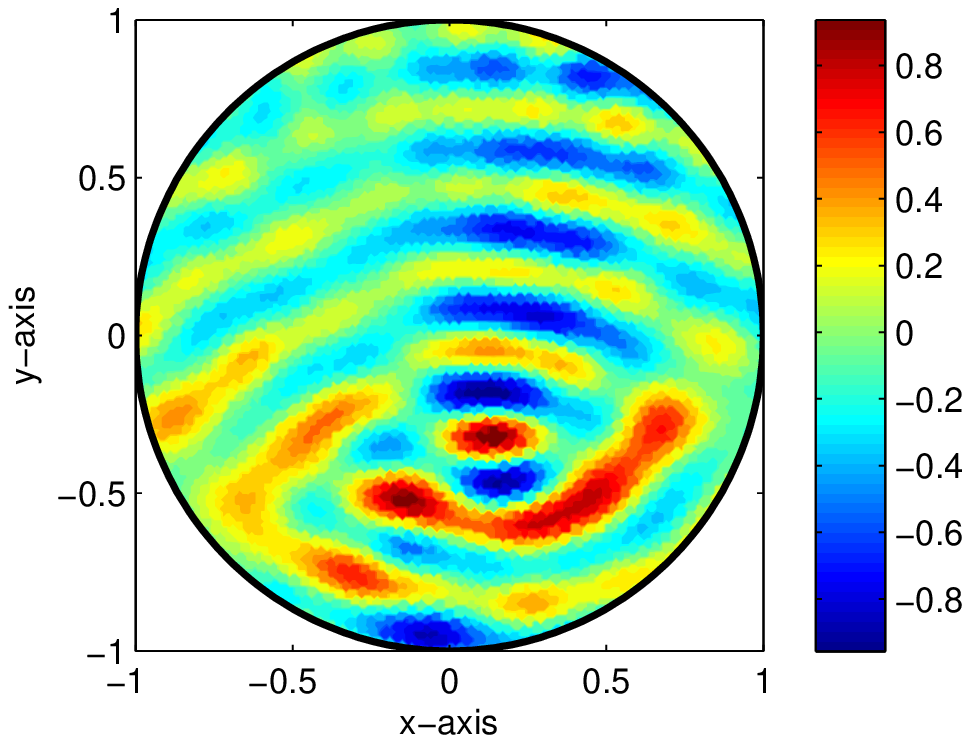}
\includegraphics[width=0.325\textwidth]{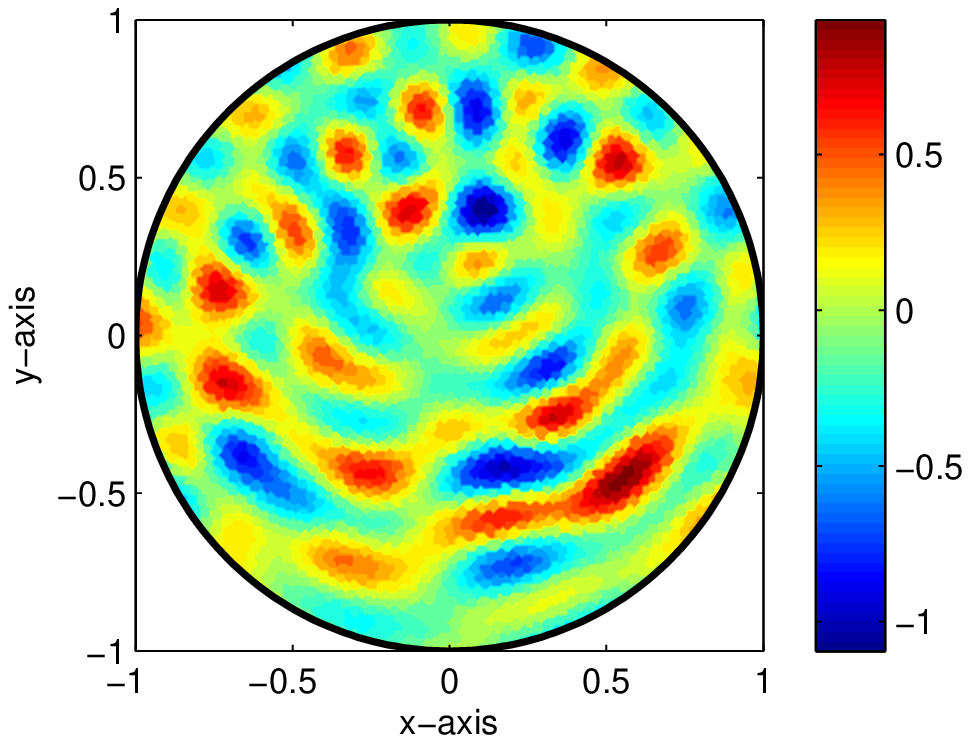}
\includegraphics[width=0.325\textwidth]{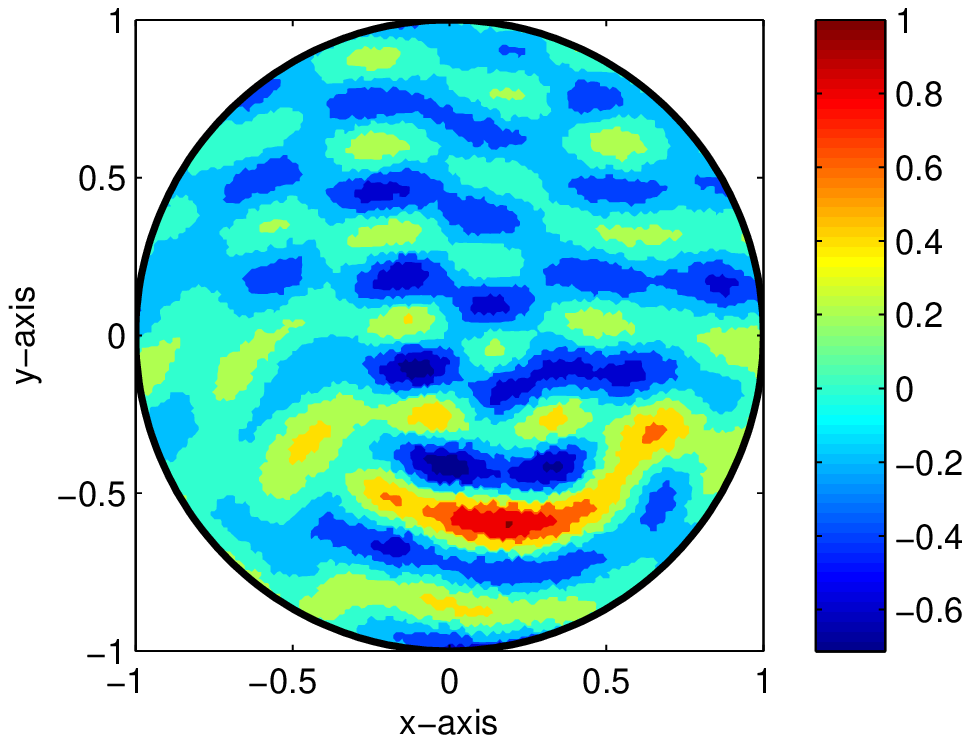}
\caption{Same as Figure \ref{Gamma1-Single}, but on a thin inhomogeneity of $\Gamma_2$.}\label{Gamma2-Single}
\end{center}
\end{figure}

\begin{figure}[h]
\begin{center}
\includegraphics[width=0.325\textwidth]{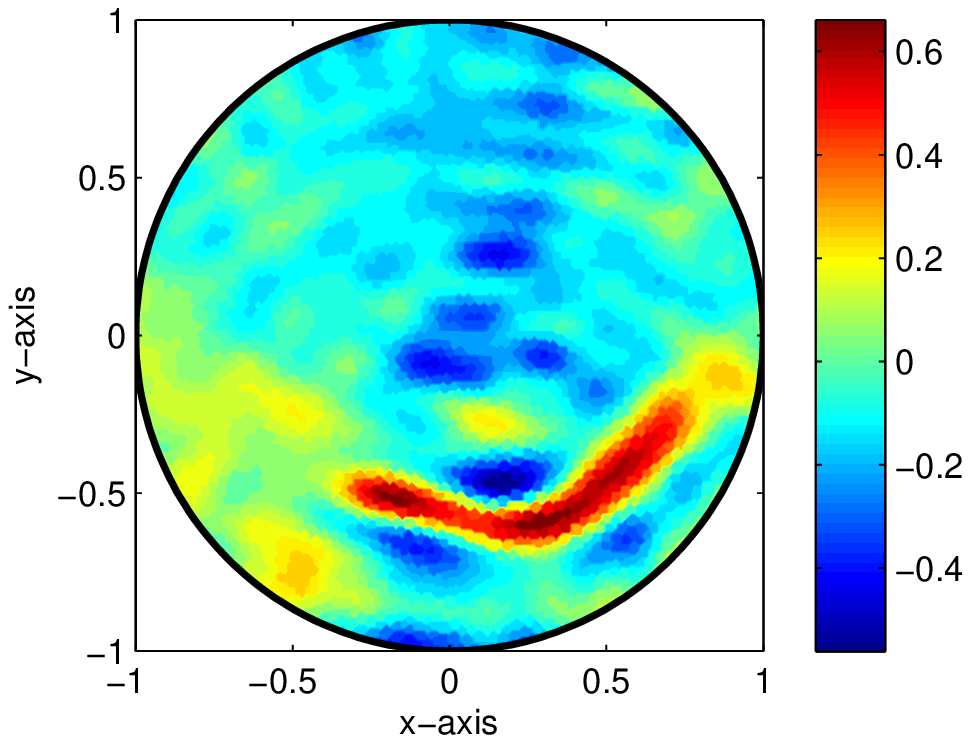}
\includegraphics[width=0.325\textwidth]{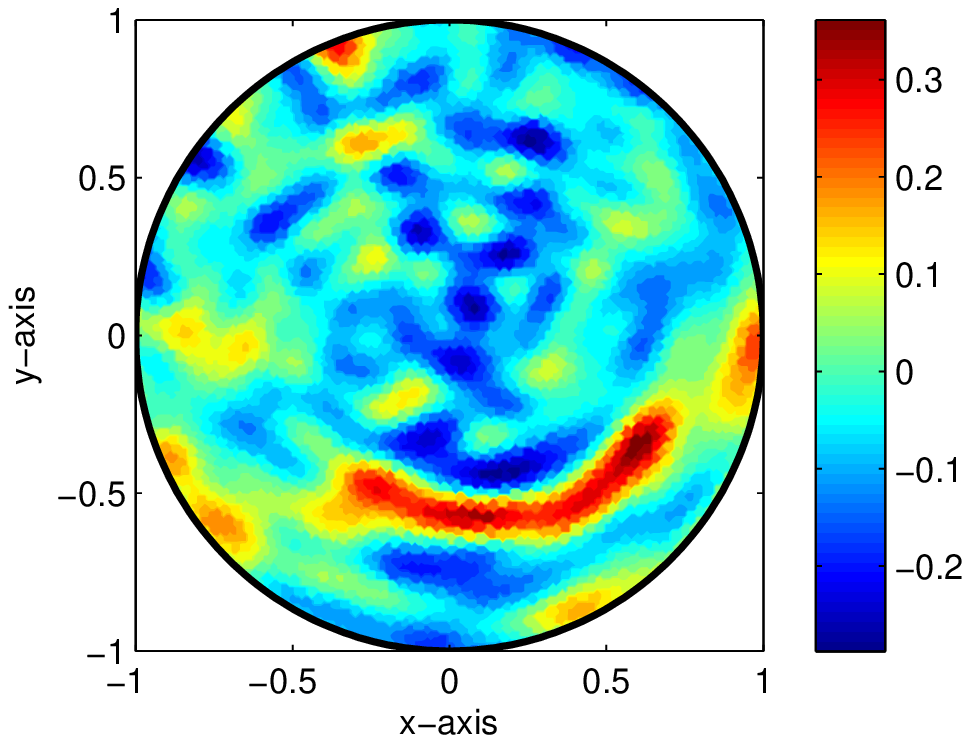}
\caption{Same as Figure \ref{Gamma1-Multi}, but on a thin inhomogeneity of $\Gamma_2$.}\label{Gamma2-Multi}
\end{center}
\end{figure}

The mathematical setting and numerical approach in \cite{P-TD1,P-TD3} are directly extendible to multiple inhomogeneities. Figure \ref{GammaM1-Single} displays maps of $\mathbb{E}_{\mathrm{SF}}(\mz;\omega)$ for $N=4$ and $N=5$ on multiple thin imaging inhomogeneities $\Gamma_1\cup\Gamma_2$ with $\eps_1=\eps_2=5$ and $\mu_1=\mu_2=5$. Unlike the single inhomogeneity cases, the true shapes of the inhomogeneities are difficult to discern when $N$ is small, because they are obscured by ghost replicas and artifacts. However, although $N=5$ is a poor choice in this case, the shapes of $\Gamma_1$ and $\Gamma_2$ are properly outlined in the map of $\mathbb{E}_{\mathrm{MF}}(\mz;K)$ when $N=4$ (see Figure \ref{GammaM1-Multi}).

\begin{figure}[h]
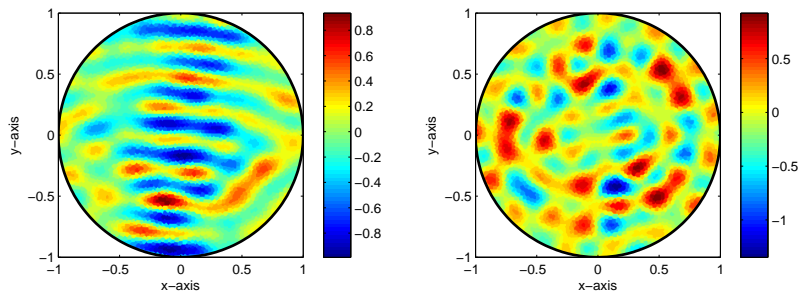

\begin{center}
\includegraphics[width=0.325\textwidth]{GammaM1-Single-4Dir.eps}
\includegraphics[width=0.325\textwidth]{GammaM1-Single-5Dir.eps}
\caption{Maps of $\mathbb{E}_{\mathrm{SF}}(\mz;2\pi/0.5)$ for $N=4$ (left) and $N=5$ (right) on thin inhomogeneities of $\Gamma_1$ and $\Gamma_2$ with the same permittivity and permeability.}\label{GammaM1-Single}
\end{center}
\end{figure}

\begin{figure}[h]
\begin{center}
\includegraphics[width=0.325\textwidth]{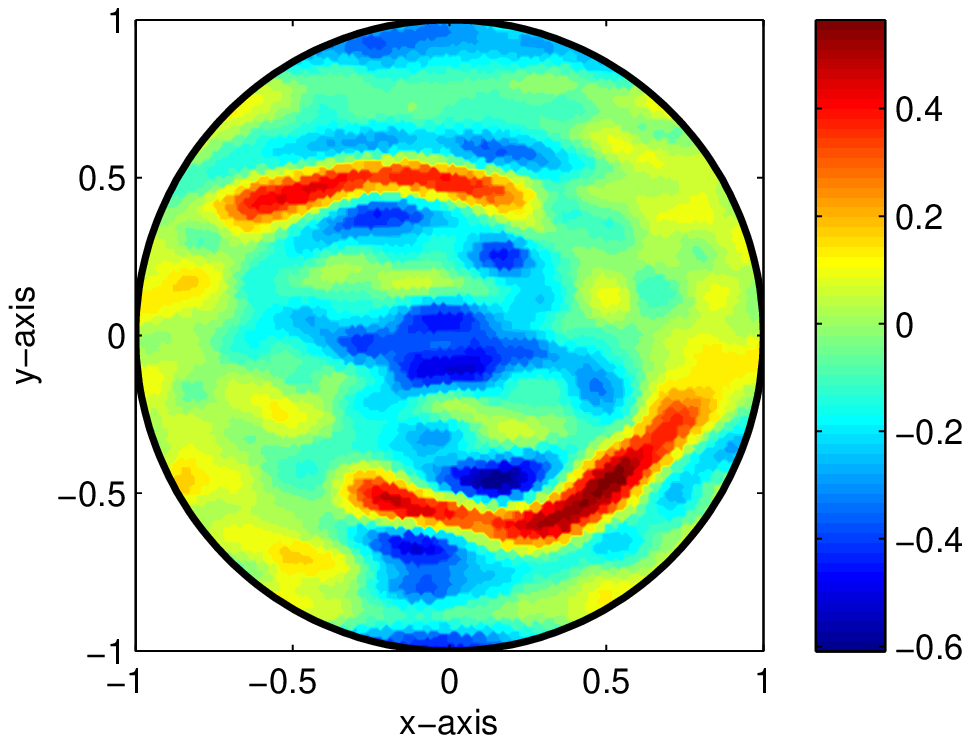}
\includegraphics[width=0.325\textwidth]{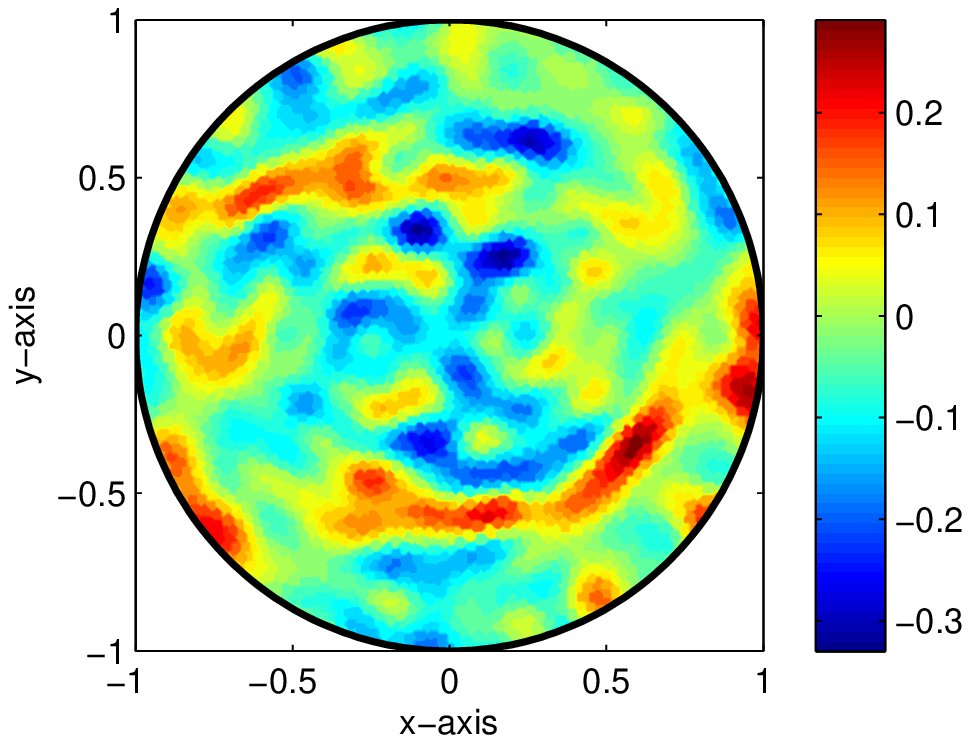}
\caption{Same as Figure \ref{GammaM1-Single} except maps of $\mathbb{E}_{\mathrm{MF}}(\mz;10)$.}\label{GammaM1-Multi}
\end{center}
\end{figure}

Figures \ref{GammaM2-Single} and \ref{GammaM2-Multi} present maps of $\mathbb{E}_{\mathrm{SF}}(\mz;\omega)$ and $\mathbb{E}_{\mathrm{MF}}(\mz;K)$, respectively, in the same configuration as the previous example but with different material properties ($\eps_1=\mu_1=5$ and $\eps_2=\mu_2=10$). Note that in the presence of two thin inhomogeneities, (\ref{Structure1}) and (\ref{Structure2}) can be re-written as
\begin{multline*}
  \mathbb{E}_{\mathrm{SF}}(\mz;\omega)\sim\sum_{n=1}^{N}\sum_{j=1}^{2}\int_{\sigma_j}\bigg[(\eps_j-\eps_0)+2\bigg(\frac{1}{\mu_j}-\frac{1}{\mu_0}\bigg)\vt_n\cdot\mt(\mx)
    +2\bigg(\frac{1}{\mu_0}-\frac{\mu_j}{\mu_0^2}\bigg)\vt_n\cdot\mn(\mx)\bigg]\\
    \times\left(J_0(\omega|\mx-\mz|)
 +2\sum_{m=1}^{\infty}(-1)^mJ_{2 m}(\omega|\mx-\mz|)\cos\{2 m(\theta_n-\phi_\mz)\}\right)d\sigma_j(\mx)
 \end{multline*}
  and
  \begin{multline*}
  \mathbb{E}_{\mathrm{MF}}(\mz;K)\sim\sum_{n=1}^{N}\sum_{j=1}^{2}\int_{\sigma_j}\bigg[(\eps_j-\eps_0)+2\bigg(\frac{1}{\mu}-\frac{1}{\mu_0}\bigg)\vt_n\cdot\mt(\mx)
    +2\bigg(\frac{1}{\mu_0}-\frac{\mu_j}{\mu_0^2}\bigg)\vt_n\cdot\mn(\mx)\bigg]\\
    \times\frac{1}{\omega_K-\omega_1}\left(\Lambda(|\mx-\mz|;K)
 +2\int_{\omega_1}^{\omega_K}\sum_{m=1}^{\infty}(-1)^mJ_{2 m}(\omega|\mx-\mz|)\cos\{2 m(\theta_n-\phi_\mz)\}d\omega\right)d\sigma_j(\mx),
 \end{multline*}
respectively. Therefore, if $\mz_1\in\Gamma_1$ and $\mz_2\in\Gamma_2$, then $\mathbb{E}_{\mathrm{SF}}(\mz_1;\omega)\leq\mathbb{E}_{\mathrm{SF}}(\mz_2;\omega)$ and $\mathbb{E}(\mz_1;K)\leq\mathbb{E}(\mz_2;K)$, i.e., the magnitude of $\Gamma_1$ will be much smaller than that of $\Gamma_2$ in the maps of $\mathbb{E}_{\mathrm{SF}}(\mz;\omega)$ and $\mathbb{E}_{\mathrm{MF}}(\mz;K)$. Hence, the shape of $\Gamma_1$ will be difficult to recognize when $N$ is small. Although the shape of $\Gamma_2$ is recognizable when $N=4$ and $N=5$, the shape is deteriorated by various large-magnitude artifacts when $N=5$, even when $K$ is sufficiently large (see Figure \ref{GammaM2-Multi}).

\begin{figure}[h]
\begin{center}
\includegraphics[width=0.325\textwidth]{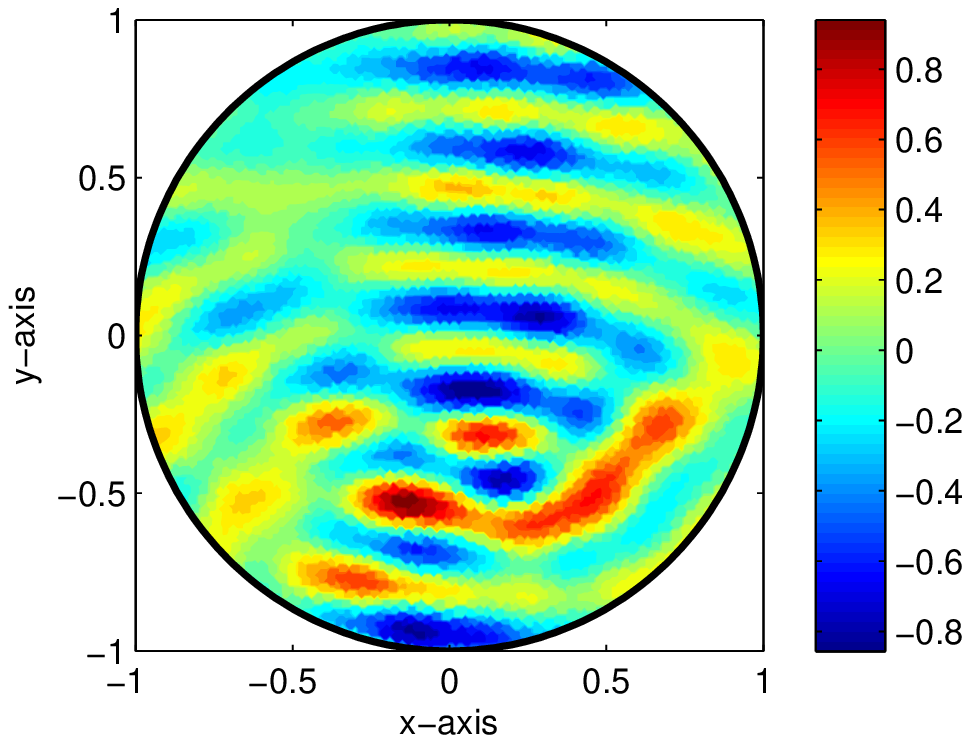}
\includegraphics[width=0.325\textwidth]{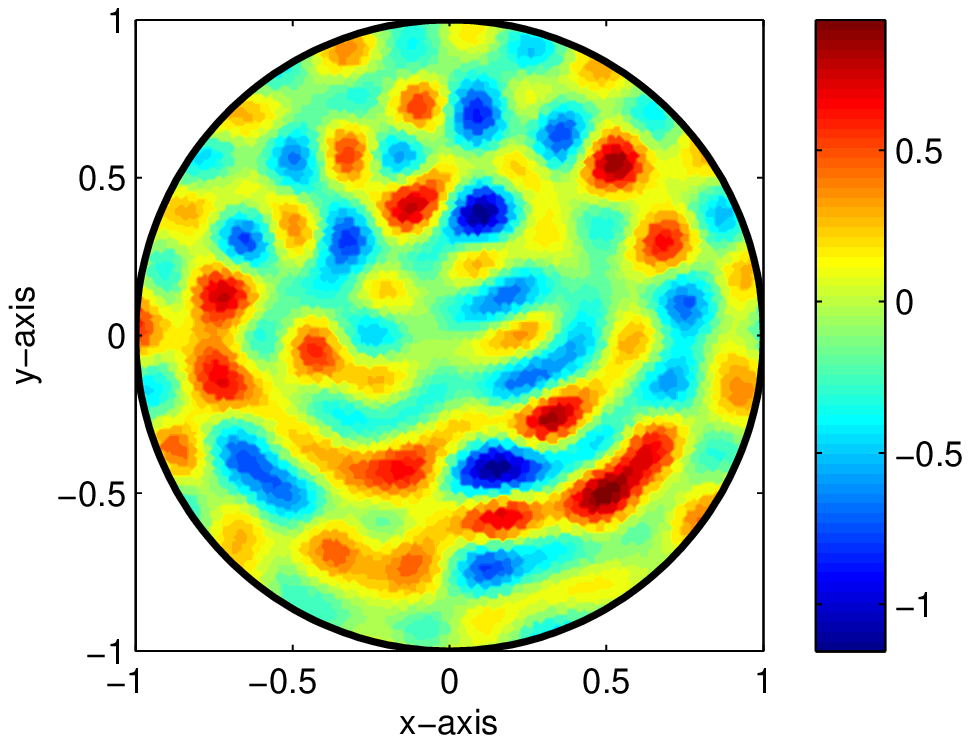}
\caption{Same as Figure \ref{GammaM1-Single}, but with different permittivities and permeabilities.}\label{GammaM2-Single}
\end{center}
\end{figure}

\begin{figure}[h]
\begin{center}
\includegraphics[width=0.325\textwidth]{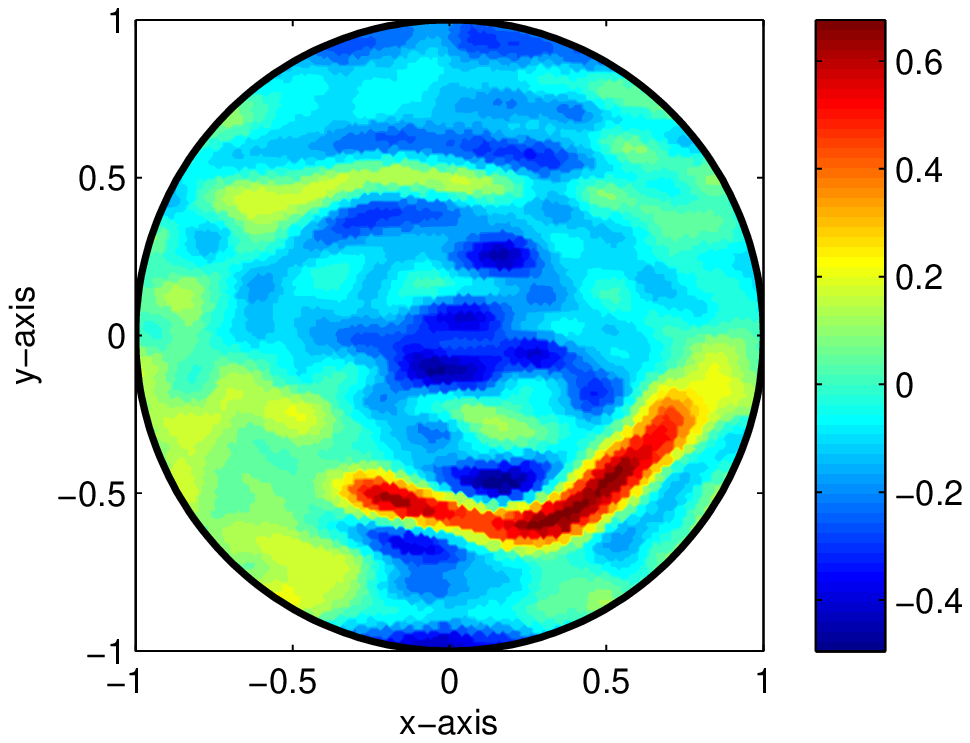}
\includegraphics[width=0.325\textwidth]{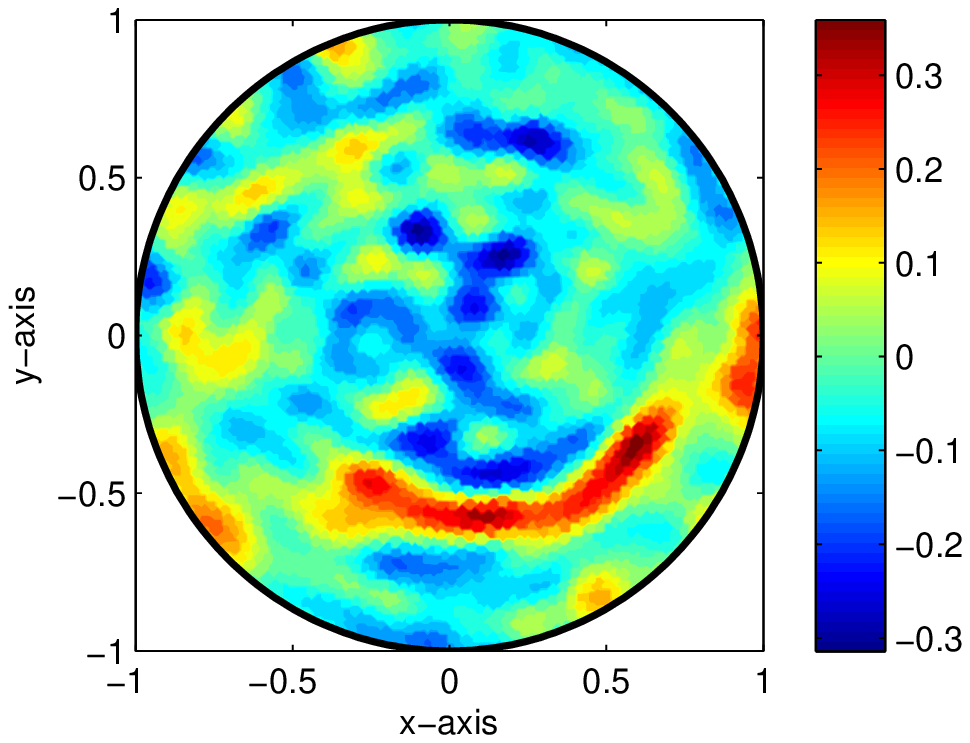}
\caption{Same as Figure \ref{GammaM2-Single} except maps of $\mathbb{E}_{\mathrm{MF}}(\mz;10)$.}\label{GammaM2-Multi}
\end{center}
\end{figure}

\section{Conclusion}\label{sec:5}
We applied the single- and multi-frequency topological derivative based on a non-iterative technique to the imaging of two-dimensional, thin penetrable inclusions embedded in a homogeneous domain. For this purpose, we varied the number of incident directions and related the topological derivative-based imaging function to an infinite series of Bessel functions of the first kind. From this relationship, we confirmed that successful imaging requires an even number (at least $4$) of incident directions. Moreover, the incident directions must be symmetrically distributed. The study also theoretically explains why small odd numbers of incident directions yield poor results.

Currently, our approach has limited ability in the imaging of multiple inhomogeneities with different permittivities and permeabilities. Improving this deficiency will be the focus of our future work.

\section*{Acknowledgments}
This research was supported by Basic Science Research Program through the National Research Foundation of Korea (NRF) funded by the Ministry of Education(No. NRF-2014R1A1A2055225) and the research program of Kookmin University in Korea.

\bibliographystyle{elsarticle-num-names}
\bibliography{../../../References}

\begin{thebibliography}{23}
\providecommand{\natexlab}[1]{#1}
\providecommand{\url}[1]{\texttt{#1}}
\providecommand{\urlprefix}{URL }
\expandafter\ifx\csname urlstyle\endcsname\relax
  \providecommand{\doi}[1]{doi:\discretionary{}{}{}#1}\else
  \providecommand{\doi}[1]{doi:\discretionary{}{}{}\begingroup
  \urlstyle{rm}\url{#1}\endgroup}\fi
\providecommand{\bibinfo}[2]{#2}

\bibitem[{\`Alvarez et~al.(2009)\`Alvarez, Dorn, Irishina, and Moscoso}]{ADIM}
\bibinfo{author}{D.~\`Alvarez}, \bibinfo{author}{O.~Dorn},
  \bibinfo{author}{N.~Irishina}, \bibinfo{author}{M.~Moscoso},
  \bibinfo{title}{Crack reconstruction using a level-set strategy},
  \bibinfo{journal}{J. Comput. Phys.} \bibinfo{volume}{228}
  (\bibinfo{year}{2009}) \bibinfo{pages}{5710--5721}.

\bibitem[{Burger(2001)}]{B3}
\bibinfo{author}{M.~Burger}, \bibinfo{title}{A level set method for inverse
  problems}, \bibinfo{journal}{Inverse Problems} \bibinfo{volume}{17}
  (\bibinfo{year}{2001}) \bibinfo{pages}{1327--1356}.

\bibitem[{Dorn and Lesselier(2006)}]{DL}
\bibinfo{author}{O.~Dorn}, \bibinfo{author}{D.~Lesselier},
  \bibinfo{title}{Level set methods for inverse scattering},
  \bibinfo{journal}{Inverse Problems} \bibinfo{volume}{22}
  (\bibinfo{year}{2006}) \bibinfo{pages}{R67--R131}.

\bibitem[{Litman et~al.(1998)Litman, Lesselier, and Santosa}]{LLS}
\bibinfo{author}{A.~Litman}, \bibinfo{author}{D.~Lesselier},
  \bibinfo{author}{F.~Santosa}, \bibinfo{title}{Reconstruction of a 2-{D}
  binary obstacle by controlled evolution of a level-set},
  \bibinfo{journal}{Inverse Problems} \bibinfo{volume}{14}
  (\bibinfo{year}{1998}) \bibinfo{pages}{685--706}.

\bibitem[{Park and Lesselier(2009)}]{PL4}
\bibinfo{author}{W.-K. Park}, \bibinfo{author}{D.~Lesselier},
  \bibinfo{title}{Reconstruction of thin electromagnetic inclusions by a level
  set method}, \bibinfo{journal}{Inverse Problems} \bibinfo{volume}{25}
  (\bibinfo{year}{2009}) \bibinfo{pages}{085010}.

\bibitem[{Santosa(1996)}]{S1}
\bibinfo{author}{F.~Santosa}, \bibinfo{title}{A level-set approach for inverse
  problems involving obstacles}, \bibinfo{journal}{ESAIM: Control Optim. Calc.
  Var.} \bibinfo{volume}{1} (\bibinfo{year}{1996}) \bibinfo{pages}{17--33}.

\bibitem[{Soko{\l}owski and Zochowski(1999)}]{SZ}
\bibinfo{author}{J.~Soko{\l}owski}, \bibinfo{author}{A.~Zochowski},
  \bibinfo{title}{On the topological derivative in shape optimization},
  \bibinfo{journal}{SIAM J. Control. Optim.} \bibinfo{volume}{37}
  (\bibinfo{year}{1999}) \bibinfo{pages}{1251--1272}.

\bibitem[{Ventura et~al.(2002)Ventura, Xu, and Belytschko}]{VXB}
\bibinfo{author}{G.~Ventura}, \bibinfo{author}{J.~X. Xu},
  \bibinfo{author}{T.~Belytschko}, \bibinfo{title}{A vector level set method
  and new discontinuity approximations for crack growth by {EFG}},
  \bibinfo{journal}{Int. J. Numer. Meth. Engng.} \bibinfo{volume}{54}
  (\bibinfo{year}{2002}) \bibinfo{pages}{923--944}.

\bibitem[{Zinn(1989)}]{Z}
\bibinfo{author}{A.~Zinn}, \bibinfo{title}{On an optimisation method for the
  full- and the limited-aperture problem in inverse acoustic scattering for a
  sound-soft obstacle}, \bibinfo{journal}{Inverse Problems} \bibinfo{volume}{5}
  (\bibinfo{year}{1989}) \bibinfo{pages}{239--253}.

\bibitem[{Ammari and Kang(2004)}]{AK2}
\bibinfo{author}{H.~Ammari}, \bibinfo{author}{H.~Kang},
  \bibinfo{title}{Reconstruction of Small Inhomogeneities from Boundary
  Measurements}, vol. \bibinfo{volume}{1846} of \emph{\bibinfo{series}{Lecture
  Notes in Mathematics}}, \bibinfo{publisher}{Springer-Verlag},
  \bibinfo{address}{Berlin}, \bibinfo{year}{2004}.

\bibitem[{Ammari et~al.(2012)Ammari, Garnier, Jugnon, and Kang}]{AGJK}
\bibinfo{author}{H.~Ammari}, \bibinfo{author}{J.~Garnier},
  \bibinfo{author}{V.~Jugnon}, \bibinfo{author}{H.~Kang},
  \bibinfo{title}{Stability and resolution analysis for a topological
  derivative based imaging functional}, \bibinfo{journal}{SIAM J. Control.
  Optim.} \bibinfo{volume}{50} (\bibinfo{year}{2012}) \bibinfo{pages}{48--76}.

\bibitem[{Park(2013)}]{P-TD1}
\bibinfo{author}{W.-K. Park}, \bibinfo{title}{Multi-frequency topological
  derivative for approximate shape acquisition of curve-like thin
  electromagnetic inhomogeneities}, \bibinfo{journal}{J. Math. Anal. Appl.}
  \bibinfo{volume}{404} (\bibinfo{year}{2013}) \bibinfo{pages}{501--518}.

\bibitem[{Park(2012)}]{P-TD3}
\bibinfo{author}{W.-K. Park}, \bibinfo{title}{Topological derivative strategy
  for one-step iteration imaging of arbitrary shaped thin, curve-like
  electromagnetic inclusions}, \bibinfo{journal}{J. Comput. Phys.}
  \bibinfo{volume}{231} (\bibinfo{year}{2012}) \bibinfo{pages}{1426--1439}.

\bibitem[{Ammari et~al.(2010)Ammari, Kang, Lee, and Park}]{AKLP}
\bibinfo{author}{H.~Ammari}, \bibinfo{author}{H.~Kang},
  \bibinfo{author}{H.~Lee}, \bibinfo{author}{W.-K. Park},
  \bibinfo{title}{Asymptotic imaging of perfectly conducting cracks},
  \bibinfo{journal}{SIAM J. Sci. Comput.} \bibinfo{volume}{32}
  (\bibinfo{year}{2010}) \bibinfo{pages}{894--922}.

\bibitem[{Bonnet(2011)}]{B1}
\bibinfo{author}{M.~Bonnet}, \bibinfo{title}{Fast identification of cracks
  using higher-order topological sensitivity for 2-{D} potential problems},
  \bibinfo{journal}{Eng. Anal. Bound. Elem.} \bibinfo{volume}{35}
  (\bibinfo{year}{2011}) \bibinfo{pages}{223--235}.

\bibitem[{Carpio and Rapun(2008)}]{CR}
\bibinfo{author}{A.~Carpio}, \bibinfo{author}{M.-L. Rapun},
  \bibinfo{title}{Solving inhomogeneous inverse problems by topological
  derivative methods}, \bibinfo{journal}{Inverse Problems} \bibinfo{volume}{24}
  (\bibinfo{year}{2008}) \bibinfo{pages}{045014}.

\bibitem[{Eschenauer et~al.(1994)Eschenauer, Kobelev, and Schumacher}]{EKS}
\bibinfo{author}{H.~A. Eschenauer}, \bibinfo{author}{V.~V. Kobelev},
  \bibinfo{author}{A.~Schumacher}, \bibinfo{title}{Bubble method for topology
  and shape optimization of structures}, \bibinfo{journal}{Struct. Optim.}
  \bibinfo{volume}{8} (\bibinfo{year}{1994}) \bibinfo{pages}{42--51}.

\bibitem[{Jleli et~al.(2015)Jleli, Samet, and Vial}]{JSV}
\bibinfo{author}{M.~Jleli}, \bibinfo{author}{B.~Samet},
  \bibinfo{author}{G.~Vial}, \bibinfo{title}{Topological sensitivity analysis
  for the modified Helmholtz equation under an impedance condition on the
  boundary of a hole}, \bibinfo{journal}{J. Math. Pures Appl.}
  \bibinfo{volume}{103} (\bibinfo{year}{2015}) \bibinfo{pages}{557--574}.

\bibitem[{Ammari et~al.(2011)Ammari, Garnier, Kang, Park, and S{\o}lna}]{AGKPS}
\bibinfo{author}{H.~Ammari}, \bibinfo{author}{J.~Garnier},
  \bibinfo{author}{H.~Kang}, \bibinfo{author}{W.-K. Park},
  \bibinfo{author}{K.~S{\o}lna}, \bibinfo{title}{Imaging schemes for perfectly
  conducting cracks}, \bibinfo{journal}{SIAM J. Appl. Math.}
  \bibinfo{volume}{71} (\bibinfo{year}{2011}) \bibinfo{pages}{68--91}.

\bibitem[{Park(2015)}]{P-SUB3}
\bibinfo{author}{W.-K. Park}, \bibinfo{title}{Multi-frequency subspace
  migration for imaging of perfectly conducting, arc-like cracks in full- and
  limited-view inverse scattering problems}, \bibinfo{journal}{J. Comput.
  Phys.} \bibinfo{volume}{283} (\bibinfo{year}{2015}) \bibinfo{pages}{52--80}.

\bibitem[{Guniza and Pourahmadian(2015)}]{GP}
\bibinfo{author}{B.~Guniza}, \bibinfo{author}{F.~Pourahmadian},
  \bibinfo{title}{Why the high-frequency inverse scattering by topological
  sensitivity may work}, \bibinfo{journal}{Proc. Roy. Soc. A.}
  \bibinfo{volume}{471} (\bibinfo{year}{2015}) \bibinfo{pages}{20150187}.

\bibitem[{Abramowitz and Stegun(1996)}]{AS-Book}
\bibinfo{author}{M.~Abramowitz}, \bibinfo{author}{I.~A. Stegun},
  \bibinfo{title}{Handbook of Mathematical Functions, with Formulas, Graphs,
  and Mathematical Tables}, \bibinfo{publisher}{Dover}, \bibinfo{address}{New
  York}, \bibinfo{year}{1996}.

\bibitem[{Ahn et~al.(2014)Ahn, Jeon, Ma, and Park}]{AJMP}
\bibinfo{author}{C.~Y. Ahn}, \bibinfo{author}{K.~Jeon}, \bibinfo{author}{Y.-K.
  Ma}, \bibinfo{author}{W.-K. Park}, \bibinfo{title}{A study on the topological
  derivative-based imaging of thin electromagnetic inhomogeneities in
  limited-aperture problems}, \bibinfo{journal}{Inverse Problems}
  \bibinfo{volume}{30} (\bibinfo{year}{2014}) \bibinfo{pages}{105004}.

\end{thebibliography}

\end{document}